\tikzstyle{printersafe}=[snake=snake,segment amplitude=0 pt]
\newtheorem{proposition}{\em Proposition}
\newtheorem{problem}{\em Problem}
\newtheorem{theorem}{\em Theorem}
\newtheorem{conjecture}{\em Conjecture}
\newtheorem{definition}{\em Definition}
\newtheorem{lemma}{\em Lemma}
\newtheorem{remark}{\em Remark}
\newtheorem{corollary}{\em Corollary}
\newtheorem{observation}{\em Observation}
\journal{Latex Templates}
\begin{document}

\begin{frontmatter}

\title{Non-conflicting no-where zero $Z_2\times Z_2$ flows in cubic graphs}

\author[label1]{Vahan Mkrtchyan}
\address[label1]{Department of Mathematics and Computer Science,\\ College of the Holy Cross, Worcester, MA, USA}

%
\ead{vahan.mkrtchyan@gssi.it}
%
%

\begin{abstract}
Let $Z_2\times Z_2=\{0, \alpha, \beta, \alpha+\beta\}$. If $G$ is a bridgeless cubic graph, $F$ is a perfect matching of $G$ and $\overline{F}$ is the complementary 2-factor of $F$, then a no-where zero $Z_2\times Z_2$-flow $\theta$ of $G/\overline{F}$ is called non-conflicting with respect to $\overline{F}$, if $\overline{F}$ contains no edge $e=uv$, such that $u$ is incident to an edge with $\theta$-value $\alpha$ and $v$ is incident to an edge with $\theta$-value $\beta$. In this paper, we demonstrate the usefulness of non-conflicting flows by showing that if a cubic graph $G$ admits such a flow with respect to some perfect matching $F$, then $G$ admits a normal 6-edge-coloring. We use this observation in order to show that claw-free bridgeless cubic graphs, bridgeless cubic graphs possessing a 2-factor having at most two cycles admit a normal 6-edge-coloring. We demonstrate the usefulness of non-conflicting flows further by relating them to a recent conjecture of Thomassen about edge-disjoint perfect matchings in highly connected regular graphs. In the end of the paper, we construct infinitely many 2-edge-connected cubic graphs such that $G/\overline{F}$ does not admit a non-conflicting no-where zero $Z_2\times Z_2$-flow with respect to any perfect matching $F$.
\end{abstract}

\begin{keyword}
Matching \sep cubic graph \sep perfect matching \sep 2-factor \sep no-where zero $Z_2\times Z_2$ flow.
\MSC[2020] 05C70 \sep 05C15.
\end{keyword}

\end{frontmatter}



\section{Introduction}
\label{IntroSection}

Graphs considered in this paper are finite and undirected. They do not contain loops, though they may contain parallel edges. We also consider pseudo-graphs, which may contain both loops and parallel edges, and simple graphs, which contain neither loops nor parallel edges. As usual, a loop contributes to the degree of a vertex by two. For a graph $G$, $V=V(G)$ and $E=E(G)$ will denote the sets of vertices and edges of $G$, respectively. A matching in a graph $G$ is a subset $F$ of edges such that no two edges of $F$ share a vertex. A matching $F$ is perfect if every vertex of the graph is incident to an edge from $F$. For $k\geq 1$, a $k$-factor of a graph $G$ is a spanning $k$-regular subgraph of $G$. Note that if $K$ is a 1-factor of $G$, then $E(K)$ is a perfect matching in $G$.

A graph $G$ is $k$-regular if every vertex of $G$ is of degree $k$. A graph is cubic if it is 3-regular. Note that if $G$ is a cubic graph then $F$ is a perfect matching in $G$ if and only if $G-F$ is a 2-factor in $G$. This 2-factor will be called a complementary 2-factor of $F$ in $G$. For a perfect matching $F$ of a cubic graph $G$, its complementary 2-factor will be denoted by $\overline{F}$.

For a graph $G$ and a vertex $v$ let $\partial_{G}(v)$ be the set of edges of $G$ that are incident to $v$ in $G$. If $G$ is a graph then its girth is the length of the shortest cycle in $G$. For $n\geq 1$ let $K_n$ denote the unique graph on $n$ vertices where every pair of vertices is an edge in it. Such graph is called complete and usually is denoted by $K_n$ (Figure \ref{fig:K4}). A graph $G$ is bipartite, if $V(G)$ can be partitioned into two sets $V_1$ and $V_2$, such that every edge of $G$ joins a vertex from $V_1$ to $V_2$. A bipartite graph is called complete if every vertex of $V_1$ is joined to every vertex of $V_2$. When $G$ is a complete bipartite graph with $|V_1|=m$ and $|V_2|=n$, then it will be denoted by $K_{m,n}$.

For $k\geq 1$ a graph $G$ is called cyclically $k$-edge-connected, if we have to delete at least $k$ edges of $G$ so that the resulting graph contains at least two components containing a cycle. A simple path of a graph $G$ is called Hamiltonian if all vertices of $G$ lie on it. Similarly, a simple cycle of a graph $G$ is called Hamiltonian if all vertices of $G$ lie on it. A graph $G$ is called Hamiltonian if it contains a Hamiltonian cycle.

A $k$-edge-coloring of a graph $G$ is an assignment of colors $\{1,...,k\}$ to edges of $G$, such that adjacent edges receive different colors \cite{stiebitz:2012}. The smallest $k$ for which $G$ admits a $k$-edge-coloring, is called the chromatic index and is denoted by $\chi'(G)$. Vizing's classical theorem in the area states that if $G$ is a simple graph, then $\Delta(G)\leq \chi'(G)\leq \Delta(G)+1$ \cite{vizing:1964}. Here $\Delta(G)$ denotes the maximum degree of a vertex in $G$. Note that for cubic graphs this becomes to $3\leq \chi'(G)\leq 4$. Holyer's theorem states that the problem of testing a given cubic graph for $\chi'(G)=3$ is an NP-complete problem \cite{holyer:1981}.

Let $G$ and $H$ be two cubic graphs. If there is a mapping $\phi:E(G)\rightarrow E(H)$, such that for each $v\in V(G)$ there is $w\in V(H)$ such that $\phi(\partial_{G}(v)) = \partial_{H}(w)$, then $\phi$ is called an $H$-coloring of $G$. If $G$ admits an $H$-coloring, then we will write $H
\prec G$. It can be easily seen that if $H\prec G$ and $K\prec H$, then $K\prec G$. In other words, $\prec$ is a transitive relation defined on the set of cubic graphs. If $G$ is the complete bipartite graph $K_{3,3}$ and $H$ is the complete graph $K_4$, then Figure \ref{fig:HcoloringG} shows an example of an $H$-coloring of $G$. Here $V(H)=\{1,2,3,4\}$ and $E(H)=\{a_1, a_2, a_3, a_4, a_5, a_6\}$. Figure \ref{fig:HcoloringG} shows the colors of edges of $G$ with the edges of $H$, and the labels of vertices of $G$ are the vertices of $K_4$ that this vertex is mapped by the $H$-coloring of $G$.

\begin{figure}[ht]
		\begin{center}
			\begin{tikzpicture}[scale=0.85]
			

\def \r {0.2}
\def \radius {\r cm}
\def \c {\r}

\node at (-1,1) {$H$};

\draw (0,0) circle (\radius);
\node at (0,0) {$1$};

\draw (0,2) circle (\radius);
\node at (0,2) {$2$};

\draw (2,2) circle (\radius);
\node at (2,2) {$3$};

\draw (2,0) circle (\radius);
\node at (2,0) {$4$};

\draw[-, thick] (0,\r)--(0,2-\r); 
\node at (-\r,1) {$a_1$};

\draw[-, thick] (\r,2)--(2-\r,2);
\node at (1,2+\r) {$a_2$};

\draw[-, thick] (2,2-\r)--(2,\r);
\node at (2+\r,1) {$a_3$};

\draw[-, thick] (2-\r,0)--(\r,0);
\node at (1,-\r) {$a_4$};

\draw[-, thick] (\r/2,\r/2)--(2-\r/2,2-\r/2);
\node at (2*\r,2*\r+\c) {$a_5$};
\node at (2-2*\r,2-2*\r-\c) {$a_5$};

\draw[-, thick] (2-\r/2,\r/2)--(\r/2,2-\r/2);
\node at (2-2*\r,2*\r-\c) {$a_6$};
\node at (2*\r,2-2*\r+\c) {$a_6$};

\node at (9,1) {$G$};

\draw (4,0) circle (\radius);
\node at (4,0) {$1$};

\draw (8,0) circle (\radius);
\node at (8,0) {$3$};

\draw (6,-2) circle (\radius);
\node at (6,-2) {$4$};

\draw (4,2) circle (\radius);
\node at (4,2) {$1$};

\draw (8,2) circle (\radius);
\node at (8,2) {$3$};

\draw (6,4) circle (\radius);
\node at (6,4) {$4$};


\draw[-, thick] (4,\r)--(4,2-\r);
\node at (4-\r,1) {$a_1$};

\draw[-, thick] (8,\r)--(8,2-\r);
\node at (8+\r,1) {$a_2$};

\draw[-, thick] (4+\r/2,-\r/2)--(6-\r/2,-2+\r/2);
\node at (5-\c,-1-\c) {$a_4$};

\draw[-, thick] (8-\r/2,-\r/2)--(6+\r/2,-2+\r/2);
\node at (7+\c,-1-\c) {$a_3$};

\draw[-, thick] (4+\r/2,2+\r/2)--(6-\r/2,4-\r/2);
\node at (5-\c,3+\c) {$a_4$};

\draw[-, thick] (8-\r/2,2+\r/2)--(6+\r/2,4-\r/2);
\node at (7+\c,3+\c) {$a_3$};


\draw[-, thick] (6,-2+\r)--(6,4-\r);
\node at (6+\c,-2+2*\r+\c) {$a_6$};
\node at (6-\c,4-2*\r-\c) {$a_6$};

\draw[-, thick] (4+\r/2,\r/2)--(8-\r/2,2-\r/2);
\node at (4+\r/2+\c,\r+\c) {$a_5$};
\node at (8-\r/2-\c,2-\r-\c) {$a_5$};

\draw[-, thick] (4+\r/2,2-\r/2)--(8-\r/2,\r/2);
\node at (4+\r/2+\c,2-\r-\c) {$a_5$};
\node at (8-\r/2-\c,\r+\c) {$a_5$};

			\end{tikzpicture}

		\end{center}
		
		\caption{An example of an $H$-coloring of $G$.}\label{fig:HcoloringG}
	\end{figure}
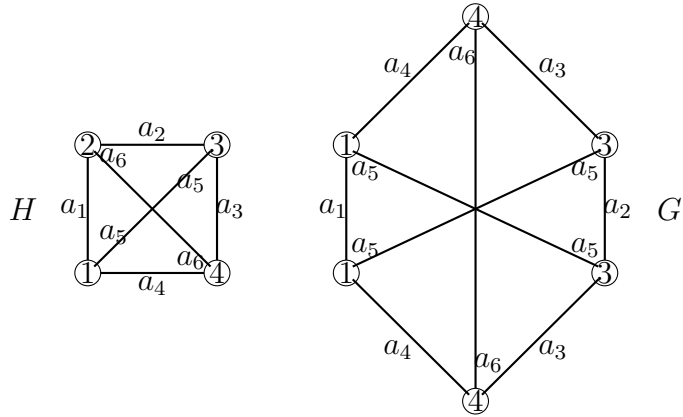

Let $P_{10}$ be the well-known Petersen graph (Figure \ref{fig:Petersen10}). The main topic of this paper is the Petersen Coloring Conjecture of Jaeger. It is a striking conjecture in graph theory that asserts that the edge-set of every bridgeless cubic graph $G$ can be colored by using as set of colors the edge-set of the Petersen graph $P_{10}$ in such a way that adjacent edges of $G$ receive as colors adjacent edges of $P_{10}$.

    \begin{figure}[ht]
	\begin{center}
	\begin{tikzpicture}[style=thick]
\draw (18:2cm) -- (90:2cm) -- (162:2cm) -- (234:2cm) --
(306:2cm) -- cycle;
\draw (18:1cm) -- (162:1cm) -- (306:1cm) -- (90:1cm) --
(234:1cm) -- cycle;
\foreach \x in {18,90,162,234,306}{
\draw (\x:1cm) -- (\x:2cm);
\draw[fill=black] (\x:2cm) circle (2pt);
\draw[fill=black] (\x:1cm) circle (2pt);
}
\end{tikzpicture}
	\end{center}
	\caption{The graph $P_{10}$.}\label{fig:Petersen10}
\end{figure}
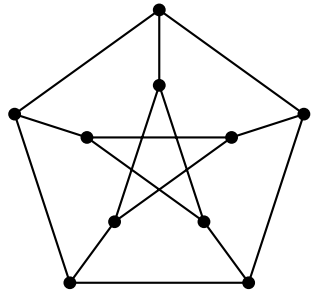

\begin{conjecture}\label{conj:P10conj} (Jaeger, 1988 \cite{Jaeger1988}) For any bridgeless cubic
graph $G$, one has $P_{10} \prec G$.
\end{conjecture}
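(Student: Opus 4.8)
The plan is to exploit the standard reformulation of $P_{10}$-colorings in terms of \emph{normal edge-colorings}. Recall (Jaeger) that $P_{10}\prec G$ holds if and only if $G$ admits a normal $5$-edge-coloring, i.e.\ a proper edge-coloring in which, for every edge $e=uv$, the five edges incident to $u$ or $v$ use exactly $3$ colors (then $e$ is \emph{poor}) or exactly $5$ colors (then $e$ is \emph{rich}). Thus I would reduce Conjecture \ref{conj:P10conj} to producing such a coloring for every bridgeless cubic graph $G$. As a warm-up I would dispose of the $3$-edge-colorable case: a proper $3$-edge-coloring forces the five edges at every edge to use exactly $3$ colors, so every edge is poor and the coloring is already normal. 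Hence the whole difficulty concentrates on snarks. I would then invoke the classical cut-reduction machinery: a nontrivial $2$- or $3$-edge-cut lets one split $G$, color the pieces, and glue the colorings along the cut, so it suffices to treat cyclically $4$-edge-connected (and, with more care, cyclically $5$-edge-connected) cubic graphs.

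For these irreducible snarks I would try to build the coloring from the paper's own apparatus. Starting from a perfect matching $F$ with complementary $2$-factor $\overline{F}$, the natural first target is a non-conflicting no-where zero $Z_2\times Z_2$-flow of $G/\overline{F}$, since the paper shows that such a flow already yields a normal $6$-edge-coloring. The strategy would then be two-staged: first secure a normal $6$-edge-coloring for \emph{every} bridgeless cubic graph, and then upgrade it to a normal $5$-edge-coloring by recoloring the edges that force the sixth color. Concretely, one would analyze the local configuration around each edge receiving color $6$ and attempt Kempe-type interchanges along the cycles of $\overline{F}$ so as to recolor it within $\{1,\dots,5\}$ while preserving normality, arguing that any obstruction can be transported around a cycle of $\overline{F}$ and absorbed.

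The main obstacle is exactly where the paper's negative result bites. Since there already exist $2$-edge-connected cubic graphs for which $G/\overline{F}$ admits \emph{no} non-conflicting flow with respect to \emph{any} perfect matching $F$, the two-stage plan cannot get off the ground through this single tool, and one must expect analogous obstructions to persist after the cut-reductions. Even the weaker first stage, a normal $6$-edge-coloring for all graphs, then requires an argument that survives the conflicting configurations, and the genuinely hard reduction from six colors to five has no known mechanism at all. I expect the decisive difficulty to be precisely this passage from normal $6$ to normal $5$ on cyclically highly connected snarks where the flow cannot be chosen non-conflicting; here one would need a new global invariant, most plausibly a flow- or homology-theoretic obstruction living on $\overline{F}$, that controls when the sixth color is truly unavoidable. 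It is the absence of such an invariant that keeps Conjecture \ref{conj:P10conj} open, and the honest assessment is that the route sketched above settles the easy and reducible cases but stalls on the core snarks, so a complete proof appears out of reach by these means alone.
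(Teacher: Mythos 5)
This statement is the Petersen Coloring Conjecture; the paper does not prove it and offers no proof to compare against --- it is presented precisely as a long-standing open problem, and the paper's actual results concern the weaker Conjecture~\ref{conj:6NormalConj} (normal $6$-edge-colorings) for restricted classes. Your proposal is therefore not a proof, and to your credit you say so explicitly in the final sentence. What you get right matches background material already in the paper: the equivalence $P_{10}\prec G \iff \chi'_N(G)\le 5$ is Proposition~\ref{prop:JaegerNormalColor}, the observation that $3$-edge-colorable graphs are trivially fine appears right after Conjecture~\ref{conj:5NormalConj}, and the reduction to cyclically $4$-edge-connected graphs is Jaeger's result recalled in Remark~\ref{rem:3edgecuts}.

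The genuine gaps are the two stages you yourself flag, and it is worth being precise about why each fails as stated. First, the ``secure a normal $6$-edge-coloring for every bridgeless cubic graph'' step is exactly Conjecture~\ref{conj:6NormalConj}, which is open; the single tool you propose (a non-conflicting nowhere-zero $Z_2\times Z_2$ flow, via Lemma~\ref{lem:NonConflictFlowNormal6coloring}) is shown by Proposition~\ref{prop:PetersenNonConflicFlow} and Theorem~\ref{thm:2edgeconnectedexamples} to be unavailable for $P_{10}$ and for infinitely many $2$-edge-connected graphs, and Remark~\ref{rem:3edgecuts} warns that the cut reductions for Conjecture~\ref{conj:6NormalConj} do not even eliminate nontrivial $3$-edge-cuts, so the class of ``irreducible'' graphs is larger than you assume. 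Second, the proposed upgrade from $6$ to $5$ colors by Kempe-type interchanges along cycles of $\overline{F}$ is only a heuristic: you give no invariant that certifies an interchange preserves normality (a swap can turn a poor or rich edge into an abnormal one two edges away), no argument that the obstructions can always be ``absorbed,'' and no termination argument. Since the conjecture implies Berge--Fulkerson and the Cycle Double Cover Conjecture, any such local recoloring scheme would have to encode a genuinely new global structure; none is supplied here.
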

The conjecture is well-known and it is largely considered hard to prove since it implies some other classical conjectures in the field such as Berge-Fulkerson Conjecture (Conjecture \ref{conj:BergeFulkerson} below), Cycle Double Cover Conjecture, (5, 2)-cycle-cover conjecture (Conjecture \ref{conj:52CDC} below) and the Shortest Cycle Cover Conjecture (see \cite{Fulkerson,Jaeger1985,Zhang1997}).
\begin{conjecture}\label{conj:BergeFulkerson} (Berge-Fulkerson, 1972 \cite{Fulkerson,Seymour}) Any bridgeless
cubic graph $G$ contains six (not necessarily distinct) perfect matchings
$F_1, \ldots , F_6$ such that any edge of $G$ belongs to exactly two of them.
\end{conjecture}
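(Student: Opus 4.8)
The plan is to attack Conjecture~\ref{conj:BergeFulkerson} by first disposing of the trivial case and then isolating the genuinely hard one. The six perfect matchings $F_1,\dots,F_6$ sought form what is usually called a \emph{Fulkerson cover}: since each covers every vertex once, their indicator vectors $\mathbbm{1}_{F_i}\in\{0,1\}^{E(G)}$ must sum to $2\cdot\mathbbm{1}_{E(G)}$. For a $3$-edge-colourable cubic graph this is immediate: if $M_1,M_2,M_3$ are the three colour classes of a proper $3$-edge-colouring, then each $M_i$ is a perfect matching and the list $M_1,M_1,M_2,M_2,M_3,M_3$ covers every edge exactly twice. Thus by Vizing's theorem (recall $3\le\chi'(G)\le 4$ for cubic $G$) the entire difficulty is concentrated in the \emph{snarks}, the bridgeless cubic graphs with $\chi'(G)=4$. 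A standard reduction then lets me assume $G$ is cyclically $4$-edge-connected of girth at least $5$, since bridges are excluded by hypothesis and a nontrivial small edge-cut or short cycle would split $G$ into strictly smaller bridgeless cubic graphs whose Fulkerson covers could be recombined.

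First I would route the argument through the Petersen Coloring Conjecture (Conjecture~\ref{conj:P10conj}), which is known to imply Berge--Fulkerson: the Petersen graph has exactly six perfect matchings $N_1,\dots,N_6$, and since $6\cdot 5 = 2\cdot 15$ each of its edges lies in exactly two of them, so a Petersen colouring $\phi\colon E(G)\to E(P_{10})$ pulls them back to matchings $F_i=\phi^{-1}(N_i)$ forming a Fulkerson cover of $G$. It would therefore suffice to produce a Petersen colouring of every bridgeless cubic $G$, equivalently a normal $5$-edge-colouring. The machinery of this paper offers a natural path: by the main observation stated above, any $G$ carrying a non-conflicting no-where zero $Z_2\times Z_2$-flow with respect to some perfect matching $F$ admits a normal $6$-edge-colouring. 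The program would then be to (i) tighten that construction so it outputs a normal $5$-edge-colouring rather than a $6$-edge-colouring, and (ii) establish that every bridgeless cubic graph actually carries such a flow for some $F$.

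The hard part will be step (ii), and here the approach meets a genuine wall. The closing construction of this paper produces infinitely many $2$-edge-connected cubic graphs $G$ for which $G/\overline{F}$ admits \emph{no} non-conflicting no-where zero $Z_2\times Z_2$-flow with respect to \emph{any} perfect matching $F$. Hence the non-conflicting flow method, no matter how refined, cannot by itself certify a Petersen colouring --- or even a normal $6$-colouring --- for all bridgeless cubic graphs, and so cannot close Conjecture~\ref{conj:BergeFulkerson} alone. What it does deliver are partial classes, namely claw-free bridgeless cubic graphs and those with a $2$-factor of at most two cycles, which can be read as supporting evidence rather than a proof. Closing the residual general-snark case would require a fundamentally new ingredient: either an invariant guaranteed to exist on every snark that still forces a Fulkerson cover, or a direct inductive construction of the six matchings bypassing colourings entirely. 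I therefore expect the real obstacle to be precisely the one that has kept Berge--Fulkerson open for decades, the absence of any globally available certificate on arbitrary snarks; what is realistically achievable along these lines is the conditional reduction to Petersen colouring together with the partial classes above, not a full proof.
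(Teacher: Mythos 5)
There is nothing in the paper to compare your attempt against: the statement you were given is the Berge--Fulkerson Conjecture, which the paper records as Conjecture \ref{conj:BergeFulkerson} purely for context. It is a famous open problem; the paper neither proves it nor claims to, and you were right not to manufacture a proof. Judged as mathematics, the parts of your write-up that assert something are correct. The $3$-edge-colourable case by doubling the colour classes is sound. The pullback argument is the standard reason Conjecture \ref{conj:P10conj} implies Conjecture \ref{conj:BergeFulkerson}: $P_{10}$ has exactly six perfect matchings, each edge lying in exactly two of them, and preimages of perfect matchings under a Petersen colouring (equivalently, by Proposition \ref{prop:JaegerNormalColor}, a normal $5$-edge-colouring) are perfect matchings. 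One small nit: your counting $6\cdot 5=2\cdot 15$ only shows the \emph{average} edge is covered twice; to get ``exactly two'' for every edge you should invoke the edge-transitivity of $P_{10}$ or check directly. Also, your ``standard reduction'' to cyclically $4$-edge-connected girth-$\geq 5$ snarks is folklore and fine, but note the paper cites the stronger reduction of M\'{a}\v{c}ajov\'{a} and Mazzuoccolo \cite{MMBergeFulkersonCyclically5} to cyclically $5$-edge-connected snarks.

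Your diagnosis of where the paper's machinery stops is also accurate, with one caveat about logical direction. Lemma \ref{lem:NonConflictFlowNormal6coloring} yields only normal $6$-edge-colourings, not the normal $5$-edge-colourings needed for the Petersen-colouring route, and Theorem \ref{thm:2edgeconnectedexamples} shows its hypothesis fails on infinitely many $2$-edge-connected cubic graphs, so non-conflicting flows cannot be a universal certificate --- exactly as you say. But keep in mind the lemma is one-directional: absence of a non-conflicting flow does not preclude a normal $6$- or even $5$-edge-colouring ($P_{10}$ itself has no such flow by Proposition \ref{prop:PetersenNonConflicFlow}, yet admits a normal $5$-edge-colouring), so Theorem \ref{thm:2edgeconnectedexamples} kills the \emph{method}, not the conjecture; moreover the counterexamples there are only $2$-edge-connected, and whether $P_{10}$ is the unique $3$-edge-connected obstruction is left open in Section \ref{sec:conclusion}. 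Your bottom line stands: what is realistically extractable from this paper is the conditional reduction through Petersen colourings together with the partial classes of Theorems \ref{thm:clawfree} and \ref{thm:TwoCycles2factor}, and no proof of Conjecture \ref{conj:BergeFulkerson} was to be expected.
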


\begin{conjecture}\label{conj:52CDC}
((5, 2)-cycle-cover conjecture, \cite{Celmins1984,Preiss1981}) Any bridgeless
graph $G$ (not necessarily cubic) contains five even subgraphs such that any
edge of $G$ belongs to exactly
two of them.
\end{conjecture}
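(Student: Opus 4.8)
The plan is to derive Conjecture~\ref{conj:52CDC} from the Petersen Coloring Conjecture (Conjecture~\ref{conj:P10conj}) by a pull-back argument, after first reducing the general bridgeless case to the cubic one and disposing of the $3$-edge-colorable cubic graphs by hand. First I would reduce to cubic graphs. Since a bridgeless graph has minimum degree at least $2$, I would suppress every vertex of degree $2$ (smoothing its two incident edges into one), and explode every vertex $v$ of degree $d\ge 4$ by replacing it with a $d$-cycle whose vertices each receive one of the edges formerly at $v$; this produces a bridgeless cubic graph $H$. A routine parity computation shows that if $H$ has a family of five even subgraphs doubly covering its edges, then restricting these subgraphs to the edges inherited from the original graph again yields even subgraphs that doubly cover, so a $(5,2)$-cover of $H$ projects back to one of the original graph. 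The only point requiring care here is to carry out the explosion so that $2$-edge-connectivity is preserved.

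For a cubic graph that is $3$-edge-colorable this is immediate and unconditional: if $M_1,M_2,M_3$ are the three color classes, then $M_1\cup M_2$, $M_1\cup M_3$ and $M_2\cup M_3$ are $2$-factors, hence even subgraphs, in which every edge lies in exactly two; padding with two empty even subgraphs gives the required five. The substance is therefore the case of cubic graphs without a proper $3$-edge-coloring, and here I would invoke Conjecture~\ref{conj:P10conj} to fix a Petersen coloring $\phi:E(G)\to E(P_{10})$. Because for each $v\in V(G)$ there is $w\in V(P_{10})$ with $\phi(\partial_{G}(v))=\partial_{P_{10}}(w)$, the map $\phi$ restricts to a bijection of the three edges at $v$ onto the three edges at $w$; consequently, for any even subgraph $C$ of $P_{10}$ the degree of $v$ in $\phi^{-1}(C)$ equals $\deg_{C}(w)$, which is even, so $\phi^{-1}(C)$ is an even subgraph of $G$. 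It then suffices to produce, once and for all, five even subgraphs of $P_{10}$ that doubly cover its fifteen edges; a short explicit check furnishes such a family consisting of a $9$-cycle, a $6$-cycle and three $5$-cycles. Pulling these back along $\phi$ yields five even subgraphs of $G$, and since the color $\phi(e)$ of each edge $e$ lies in exactly two of the five subgraphs of $P_{10}$, the edge $e$ lies in exactly two of their preimages, which is the desired $(5,2)$-cover.

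The main obstacle is that this reduction settles the hard (non-$3$-edge-colorable) case only \emph{conditionally}: it rests entirely on the Petersen Coloring Conjecture, which is itself unproven and is in fact the strongest statement in the circle of conjectures recalled above. An unconditional proof would require either establishing Conjecture~\ref{conj:P10conj} or constructing the five doubly-covering even subgraphs directly for every bridgeless cubic graph, and it is precisely this direct construction on snarks that resists the available techniques. The realistic intermediate goal I would pursue is to secure the $(5,2)$-cover unconditionally for the restricted families treated in this paper---where a normal $6$-edge-coloring, and hence additional structural control over even subgraphs, is available---using that coloring as a partial substitute for the (still unknown) Petersen coloring.
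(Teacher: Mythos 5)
This item is a \emph{conjecture} in the paper, not a theorem: the paper offers no proof of it, and none is known. Your write-up, by your own admission in the final paragraph, does not prove it either. The decisive step --- handling cubic graphs that are not $3$-edge-colorable --- is obtained only by invoking Conjecture~\ref{conj:P10conj}, which is itself open (and, as the introduction of the paper notes, is generally regarded as \emph{stronger} than the statement you are trying to prove). So what you have written is a proof of the implication ``Petersen Coloring Conjecture $\Rightarrow$ $(5,2)$-cycle-cover conjecture,'' which is exactly the known relationship the paper records when it cites \cite{Jaeger1985,Zhang1997} in the sentence surrounding Conjecture~\ref{conj:52CDC}. As a proof of Conjecture~\ref{conj:52CDC} itself, the argument has an irreparable gap: the non-$3$-edge-colorable case is left entirely conditional, and that is precisely the case in which the conjecture is hard.

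The pieces you do supply are standard and essentially sound: the reduction from general bridgeless graphs to bridgeless cubic graphs by suppressing degree-$2$ vertices and expanding high-degree vertices into cycles; the unconditional $(5,2)$-cover of a $3$-edge-colorable cubic graph via the three $2$-factors $M_i\cup M_j$ padded with two empty even subgraphs; and the observation that the preimage under a Petersen coloring of an even subgraph of $P_{10}$ is an even subgraph of $G$, so that a $(5,2)$ even-subgraph cover of $P_{10}$ pulls back to one of $G$. But none of this advances the statement beyond its known status as a consequence of Jaeger's conjecture. If you want to contribute something in the spirit of this paper, the realistic target you name at the end --- unconditional $(5,2)$-covers, or weaker cycle-cover statements, for the restricted classes where a normal $6$-edge-coloring or a non-conflicting nowhere-zero $Z_2\times Z_2$ flow is available --- is the right one, but it is not what you have carried out here.
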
 It can be shown that the Petersen graph is the only 2-edge-connected cubic graph that can color all bridgeless cubic graphs \cite{Mkrt2013}. Note that \cite{Mazz11} proves that Conjecture \ref{conj:BergeFulkerson} is equivalent to proving that the edge-set of all bridgeless cubic graphs can be covered with five perfect matchings. Moreover, see the recent paper \cite{KMZ22}, which shows that every bridgeless cubic graph $G$ has a pair of perfect matchings $F_1$ and $F_2$, such that $G-(F_1\cup F_2)$ is bipartite. Note that this statement is a corollary of Conjecture \ref{conj:BergeFulkerson}, which was conjectured by Mazzuoccolo in \cite{Mazz2013}. More conjectures similar to Conjecture \ref{conj:P10conj} can be found in \cite{HakobyanAMC,Mkrt2013}. \cite{rGraphHColorings,HcoloringsRegulars} present some new results about $H$-colorings when the graphs under consideration are regular, and not necessarily are cubic.

Jaeger, in \cite{Jaeger1985}, introduced an equivalent formulation of Conjecture \ref{conj:P10conj}. Let $c$ be an edge-coloring of $G$. For a vertex $v$ of $G$, let $S_{c}(v)$ be the set of colors that edges incident to $v$ receive. If $uv$ is an edge of a cubic graph $G$, then note that $3\leq |S_{c}(u)\cup S_{c}(v)|\leq 5$.

\begin{definition}\label{def:poorrich} Suppose $ab$ is an edge of a cubic graph $G$, and $c$ is an edge-coloring of $G$. Then:
\begin{itemize}
    \item $ab$ is called poor with respect to $c$, if $|S_{c}(a)\cup S_{c}(b)|=3$,

    \item $ab$ is called abnormal with respect to $c$, if $|S_{c}(a)\cup S_{c}(b)|=4$,

    \item $ab$ is called rich with respect to $c$, if $|S_{c}(a)\cup S_{c}(b)|=5$.
\end{itemize}

\end{definition}

Edge-colorings having only poor edges are trivially $3$-edge-colorings of $G$. Also edge-colorings having only rich edges have been considered in the last years, and they are called strong edge-colorings. In \cite{Jaeger1985}, Jaeger focused on the case when all edges must be either poor or rich.

\begin{definition}\label{def:normal}
An edge-coloring $c$ of a cubic graph is normal, if $G$ does not contain abnormal edges with respect to $c$. In other words, any edge is rich or poor with respect to $c$. 
\end{definition} 

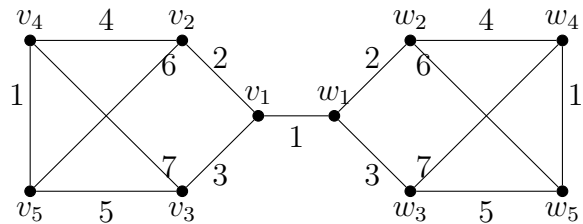
\begin{figure}[!htbp]
\begin{center}

\begin{tikzpicture}[scale=0.5]


 \node at (0, 0.55) {$v_1$};
   \node at (-2, 2.55) {$v_2$};
   \node at (-2, -2.55) {$v_3$};
   \node at (-6, 2.55) {$v_4$};
    \node at (-6, -2.55) {$v_5$};
    
    \node at (2, 0.55) {$w_1$};
   \node at (4, 2.55) {$w_2$};
   \node at (4, -2.55) {$w_3$};
   \node at (8, 2.55) {$w_4$};
    \node at (8, -2.55) {$w_5$};
    
    \node at (1, -0.55) {$1$};
    \node at (-6.35, 0.55) {$1$};
    \node at (8.35, 0.55) {$1$};
    
    \node at (-1, 1.55) {$2$};
    \node at (3, 1.55) {$2$};
    
     \node at (-1, -1.55) {$3$};
    \node at (3, -1.55) {$3$};
    
     \node at (-4, 2.55) {$4$};
    \node at (6, 2.55) {$4$};
    
    \node at (-4, -2.55) {$5$};
    \node at (6, -2.55) {$5$};
    
    \node at (-2.35, 1.35) {$6$};
    \node at (4.35, 1.35) {$6$};
    
    \node at (-2.35, -1.35) {$7$};
    \node at (4.35, -1.35) {$7$};

 \tikzstyle{every node}=[circle, draw, fill=black!,
                        inner sep=0pt, minimum width=4pt]
    \node at (0,0) (n00) {};
    
    \node at (2,0) (n20) {};


   \node at (-2,2) (nm22) {};
   \node at (4,2) (n42) {};

   
    \node at (-2,-2) (nm2m2) {};
    \node at (4,-2) (n4m2) {};

    
     \node at (-6,2) (nm62) {};
     \node at (8,2) (n82) {};
    
   
     \node at (-6,-2) (nm6m2) {};
     \node at (8,-2) (n8m2) {};

  \draw (n00)--(n20);

 \draw (n00)--(nm22);
 \draw (n00)--(nm2m2);
 
  \draw (n20)--(n42);
 \draw (n20)--(n4m2);
 
 \draw (nm22)--(nm62);
 \draw (nm2m2)--(nm62);
 \draw (nm2m2)--(nm6m2);
 \draw (nm22)--(nm6m2);
 
 \draw (n42)--(n82);
 \draw (n4m2)--(n82);
 \draw (n4m2)--(n8m2);
 \draw (n42)--(n8m2);
 
 \draw (nm62)--(nm6m2);
 \draw (n82)--(n8m2);
 

\end{tikzpicture}
\end{center}
\caption{A cubic graph that requires $7$ colors in a normal coloring. The bridge is poor. All other edges are rich. It can be shown that $\chi'_N(G)=7$.} \label{fig:exam}
\end{figure}

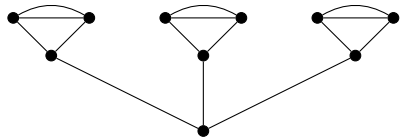
\begin{figure}[ht]
  
  \begin{center}
	
	\tikzstyle{every node}=[circle, draw, fill=black!50,
                        inner sep=0pt, minimum width=4pt]
	
		\begin{tikzpicture}
																							
			\node[circle,fill=black,draw] at (-5.5,-1) (n1) {};
			
			
			
			

			\node[circle,fill=black,draw] at (-6, -0.5) (n2) {};
																								
			\node[circle,fill=black,draw] at (-5,-0.5) (n3) {};
																								
			\node[circle,fill=black,draw] at (-3.5,-1) (n4) {};
																								
			\node[circle,fill=black,draw] at (-4, -0.5) (n5) {};
																								
			\node[circle,fill=black,draw] at (-3,-0.5) (n6) {};
																								
			\node[circle,fill=black,draw] at (-1.5,-1) (n7) {};
																								
			\node[circle,fill=black,draw] at (-2, -0.5) (n8) {};
																								
			\node[circle,fill=black,draw] at (-1,-0.5) (n9) {};
																								
			\node[circle,fill=black,draw] at (-3.5,-2) (n10) {};

			\path[every node]
			(n1) edge  (n2)

			edge  (n3)
			edge (n10)
																								   	
			(n2) edge (n3)
			edge [bend left] (n3)
																								       
			(n3) 
			(n4) edge (n5)
			edge (n6)
			edge (n10)
																								    
			(n5) edge (n6)
			edge [bend left] (n6)
			(n6)
																								   
			(n7) edge (n8)
			edge (n9)
			edge (n10)
																								    
			(n8) edge (n9)
			edge [bend left] (n9)
																								  
			;
		\end{tikzpicture}
																
	\end{center}
								
	\caption{A cubic graph that does not admit a normal $k$-edge-coloring for any $k\geq 1$.}
	\label{fig:SylvGraph}

\end{figure}

It is straightforward that an edge coloring which assigns a different color to every edge of a simple cubic graph is normal since all edges are rich. Hence, we can define the normal chromatic index of a simple cubic graph $G$, denoted by $\chi'_{N}(G)$, as the smallest $k$, for which $G$ admits a normal $k$-edge-coloring. 

 Figure \ref{fig:exam} provides an example of a normal 7-edge-coloring of a cubic graph. All edges of this graph are rich in this coloring except the unique bridge which is poor. \cite{MM2020JGT} shows that $\chi_N(G)=7$ if $G$ is the graph from Figure \ref{fig:exam}. Moreover, \cite{MM2020JGT} argues that any cubic graph having a subgraph that is isomorphic to the complete graph $K_4$ with one edge subdivided has $\chi_N(G)=7$.

Note that not all cubic graphs admit a normal $k$-edge-coloring for some $k\geq 1$. Consider a cubic multi-graph containing a triangle in which one edge is of multiplicity two (see Figure \ref{fig:SylvGraph}). It can be easily seen that such a cubic graph cannot have a normal $k$-edge-coloring, as an edge of multiplicity two is going to be abnormal in any edge-coloring.

Using the notion of normal edge-colorings, in \cite{Jaeger1985}, Jaeger has shown that:

\begin{proposition}\label{prop:JaegerNormalColor}(Jaeger, \cite{Jaeger1985}) If $G$ is a cubic graph, then $P_{10}\prec G$, if and only if $G$ admits a normal $5$-edge-coloring.
\end{proposition}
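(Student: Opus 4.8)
The plan is to use the identification of $P_{10}$ with the Kneser graph on the $2$-subsets of $\{1,\dots,5\}$ and to exhibit one canonical normal $5$-edge-coloring $c_0$ of $P_{10}$ that transports along $H$-colorings in both directions. Concretely, I would take the vertices of $P_{10}$ to be the $\binom{5}{2}=10$ two-element subsets of $[5]=\{1,\dots,5\}$, with two vertices adjacent exactly when the subsets are disjoint; writing each vertex as its complementary $3$-subset, adjacency becomes ``the two $3$-subsets meet in a single element.'' I define $c_0$ by coloring the edge joining disjoint $2$-subsets $\{a,b\}$ and $\{c,d\}$ with the fifth color $e\in[5]\setminus\{a,b,c,d\}$. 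A short check shows that $c_0$ is a proper $5$-edge-coloring with $S_{c_0}(x)=[5]\setminus x$ for every vertex $x$, and that any two adjacent vertices have complementary $3$-subsets meeting in exactly one element; hence every edge of $P_{10}$ is rich and $c_0$ is a normal $5$-edge-coloring, with the color of each edge $xy$ equal to the unique common element of $S_{c_0}(x)$ and $S_{c_0}(y)$.

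For the forward implication, suppose $\phi\colon E(G)\to E(P_{10})$ realizes $P_{10}\prec G$, and set $c:=c_0\circ\phi$. Since $\phi$ restricts to a bijection $\partial_G(v)\to\partial_{P_{10}}(w)$ for a suitable vertex $w=w(v)$ and $c_0$ is proper, $c$ is a proper $5$-edge-coloring and $S_c(v)=S_{c_0}(w(v))$. Given an edge $e=uv$ of $G$, the edge $\phi(e)$ is incident in $P_{10}$ to both $w(u)$ and $w(v)$; hence either $w(u)=w(v)$, giving $S_c(u)=S_c(v)$ and making $e$ poor, or $w(u)\neq w(v)$ are adjacent, so $\phi(e)=w(u)w(v)$ is rich under $c_0$ and $e$ is rich under $c$. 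In either case $e$ is poor or rich, so $c$ is normal. I expect this direction to be routine.

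The reverse implication is the substantive one. Assume $c$ is a normal $5$-edge-coloring of $G$ with colors in $[5]$. Since $G$ is cubic and $c$ is proper, $S_c(v)$ is a $3$-subset of $[5]$ for every vertex $v$, which I read as a vertex of $P_{10}$. For an edge $e=uv$ the color $c(e)$ lies in $S_c(u)\cap S_c(v)$, so $|S_c(u)\cap S_c(v)|\in\{1,2,3\}$; here normality is decisive, as it forbids the value $2$ (an abnormal edge) and leaves only $|S_c(u)\cap S_c(v)|=3$, i.e. $S_c(u)=S_c(v)$ (poor), or $|S_c(u)\cap S_c(v)|=1$, i.e. $S_c(u)\cap S_c(v)=\{c(e)\}$ (rich). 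I would then define $\phi(e)$ to be the unique edge of $P_{10}$ incident to the vertex $S_c(v)$ whose $c_0$-color equals $c(e)$; the same recipe applied at $u$ yields the same edge, because in the poor case $S_c(u)=S_c(v)$ is one and the same Petersen vertex, while in the rich case the two vertices are adjacent and joined precisely by the edge of color $c(e)$. Hence $\phi$ is well defined, and by construction it maps the three edges at $v$, which carry the three colors of $S_c(v)$, bijectively onto the three edges at $S_c(v)$; that is $\phi(\partial_G(v))=\partial_{P_{10}}(S_c(v))$, so $\phi$ witnesses $P_{10}\prec G$.

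The main obstacle, and the step I would treat most carefully, is the consistency of $\phi$ on poor edges in the reverse direction: one must verify that both endpoints of a poor edge are sent to the same Petersen vertex and that the shared edge is then assigned unambiguously. The conceptual point underlying the whole equivalence is that the three local possibilities for $|S_c(u)\cap S_c(v)|$ correspond exactly to rich, abnormal, and poor edges, and that the abnormal case is precisely the configuration with no counterpart among adjacent-or-equal vertices of $P_{10}$; ruling it out via normality is what lets the $3$-subset assignment $v\mapsto S_c(v)$ extend to an $H$-coloring.
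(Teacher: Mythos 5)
Your proof is correct and complete in both directions: the Kneser-graph coloring $c_0$ of $P_{10}$ is indeed proper with $S_{c_0}(x)=[5]\setminus x$, the composition $c_0\circ\phi$ is normal because an edge of $P_{10}$ incident to two vertices either identifies them (poor) or joins adjacent, rich vertices, and in the converse direction normality is exactly what excludes $|S_c(u)\cap S_c(v)|=2$ and makes the assignment $v\mapsto S_c(v)$, $e\mapsto$ the edge at $S_c(v)$ of $c_0$-colour $c(e)$, a well-defined $P_{10}$-coloring. Note that the paper itself gives no proof of this proposition --- it is quoted as a known result of Jaeger with a citation --- so there is nothing to compare against; your argument is the standard one from Jaeger's original paper, and it would also survive the paper's convention that cubic graphs may contain parallel edges, since two parallel edges force $|S_c(u)\cap S_c(v)|\ge 2$ and hence, under normality, a poor edge with $S_c(u)=S_c(v)$.
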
 This implies that Conjecture \ref{conj:P10conj} can be stated as follows:

\begin{conjecture}\label{conj:5NormalConj} For any bridgeless cubic graph $G$, $\chi'_{N}(G)\leq 5$.
\end{conjecture}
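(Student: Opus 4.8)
The plan is to reduce to the Petersen-coloring reformulation already recorded in Proposition \ref{prop:JaegerNormalColor}: since $\chi'_{N}(G)\le 5$ holds exactly when $P_{10}\prec G$, it suffices, for an arbitrary bridgeless cubic graph $G$, to exhibit a normal edge-coloring that uses only five colors, equivalently a $P_{10}$-coloring. The most promising constructive route runs through the non-conflicting-flow machinery of this paper. Concretely, I would fix a perfect matching $F$ (which exists by Petersen's theorem) with complementary $2$-factor $\overline{F}$, form the contraction $G/\overline{F}$ whose edges are exactly the matching edges, and search there for a non-conflicting nowhere-zero $Z_2\times Z_2$-flow $\theta$. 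By the central observation of the paper, any such $\theta$ already hands us a normal $6$-edge-coloring of $G$, and for the distinguished families (claw-free graphs, and graphs whose $2$-factor has at most two cycles) the matching $F$ can be chosen so that a non-conflicting $\theta$ is guaranteed.

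With a normal $6$-coloring in hand, the remaining task would be to compress six color classes into five while preserving normality, i.e.\ keeping every edge poor or rich. I would try to do this locally, by identifying a color class that can be redistributed among the other five along the cycles of $\overline{F}$ and the $\alpha$/$\beta$/$\alpha+\beta$-labelled matching edges, using the non-conflicting condition to certify that no abnormal edge is created in the process. I would first rehearse this reduction on the families above, where the combinatorics of the flow is most transparent, and only then attempt the general case.

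The hard part is that this program cannot close in its present form, and the obstruction is fundamental rather than technical. On one side, the target statement is nothing but a restatement of Jaeger's Petersen Coloring Conjecture (Conjecture \ref{conj:P10conj}), which is known to imply the Berge--Fulkerson Conjecture (Conjecture \ref{conj:BergeFulkerson}), the Cycle Double Cover and $(5,2)$-cycle-cover conjectures (Conjecture \ref{conj:52CDC}), and is regarded as well beyond the reach of current methods. On the other side, the flow tool itself is provably inadequate as a universal engine: this paper constructs infinitely many $2$-edge-connected cubic graphs for which $G/\overline{F}$ carries no non-conflicting nowhere-zero $Z_2\times Z_2$-flow with respect to \emph{any} perfect matching $F$. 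For such graphs no choice of $F$ feeds the construction above, so the method stalls even at the level of six colors. I therefore expect the realistic outcome of this plan to be exactly the conditional and class-restricted theorems the paper establishes, with the unconditional bound $\chi'_{N}(G)\le 5$ remaining open and demanding a genuinely new idea --- one that handles the conflicting configurations directly or couples the flow on $G/\overline{F}$ with global structural information about $G$.
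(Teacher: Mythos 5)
The statement you were asked about is Conjecture \ref{conj:5NormalConj}, and the paper offers no proof of it: it is Jaeger's Petersen Coloring Conjecture, restated via Proposition \ref{prop:JaegerNormalColor} as the assertion that $P_{10}\prec G$ for every bridgeless cubic graph $G$, and it is presented in the paper (and in the literature) as open. Your write-up correctly recognizes this, and your refusal to claim a proof is the right call; there is nothing in the paper for your attempt to diverge from, because the paper proves only weaker, conditional results.

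Your diagnosis of why the flow route cannot close is also accurate on both counts. First, Lemma \ref{lem:NonConflictFlowNormal6coloring} converts a non-conflicting nowhere-zero $Z_2\times Z_2$-flow on $G/\overline{F}$ into a normal \emph{six}-edge-coloring, and the construction genuinely needs the sixth color: the coloring produced is a modified $Z_2\times Z_2\times Z_2$-flow in which two of the seven nonzero values are merged, and there is no general mechanism in the paper for merging a further pair of classes while keeping every edge poor or rich. Your proposed ``compress six classes into five'' step is exactly the missing idea, and nothing in the non-conflicting condition certifies it. Second, Theorem \ref{thm:2edgeconnectedexamples} shows the hypothesis of Lemma \ref{lem:NonConflictFlowNormal6coloring} fails for infinitely many $2$-edge-connected cubic graphs, so the engine stalls before even reaching six colors in general. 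The only inaccuracy worth flagging is minor: you attribute the existence of a perfect matching to ``Petersen's theorem,'' whereas the relevant tools the paper actually invokes are Theorem \ref{thm:Sch} and Theorem \ref{thm:PrescribedEdge3cuts}, which give perfect matchings through prescribed edges and meeting all $3$-edge-cuts in a single edge; plain existence is not enough for the arguments in Theorems \ref{thm:clawfree} and \ref{thm:TwoCycles2factor}. In short: the statement remains a conjecture, your proposal is not a proof, and you are correct that it is not one.
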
 In this terms, Petersen Coloring Conjecture is equivalent to saying that every bridgeless cubic graph has normal chromatic index at most 5. Observe that Conjecture \ref{conj:5NormalConj} is trivial for $3$-edge-colorable cubic graphs. This is true because in any $3$-edge-coloring $c$ of a cubic graph $G$ any edge $e$ is poor, hence $c$ is a normal edge-coloring of $G$. Thus non-$3$-edge-colorable cubic graphs are the main obstacle for Conjecture \ref{conj:5NormalConj}. Structural properties of non-3-edge-colorable bridgeless cubic graphs, sometimes called snarks, are investigated in \cite{steffen:1998,steffen:2004}. Note that Conjecture \ref{conj:5NormalConj} is verified for some non-$3$-edge-colorable bridgeless cubic graphs in \cite{LucaAKCE2020,HaggSteff2013,SedlarArXiv2023,SedlarEuJC2024}. Finally, let us note that in \cite{Samal2011} the percentage of edges of a bridgeless cubic graph, which can be made poor or rich in a 5-edge-coloring, is investigated. See for a recent paper about the percent of normal edges (that is, not abnormal edges) in 5-edge-colored cubic graphs in \cite{MMM2021}. Other recent results in this direction are obtained in \cite{MM2020DAM} and \cite{PirSerSkr}.

If we consider the larger class of simple cubic graphs, without any assumption on connectivity, some interesting questions naturally arise. Indeed, examples of simple cubic graphs with $\chi'_{N}(G) > 5$ can be constructed in this class (Figure \ref{fig:exam}). Hence it is natural to ask for a possible upper bound for this parameter. 

Let us remark that any strong edge-coloring is a normal edge-coloring. Andersen has shown in \cite{Andersen1992} that any simple cubic graph admits a strong edge-coloring with ten colors. Hence ten is also an upper-bound for the normal chromatic index. The result was improved, following the approach of Andersen, in \cite{Bilkova12}, where it is shown that any simple cubic graph admits a normal edge-coloring with nine colors. In \cite{MM2020JGT}, it is proved that if $G$ is any simple cubic graph, then $\chi'_{N}(G)\leq 7$. This result is complemented with an infinite family of simple cubic graphs in which $\chi'_{N}(G)= 7$ \cite{MM2020JGT}. Thus, the upper bound seven is (asymptotically) best-possible. 



Once the upper bound seven for all simple cubic graphs is established, one may wonder in improving it for some interesting graph classes. Of course, the first class that comes ones mind is the class of bridgeless cubic graphs. Conjecture \ref{conj:5NormalConj} predicts an upper bound five, which is difficult to prove. Thus, the following intermediate conjecture could be an excellent step in this direction.

\begin{conjecture}\label{conj:6NormalConj} (R. \v{S}\'{a}mal, \cite{Samal2016}) For any bridgeless cubic graph $G$, $\chi'_{N}(G)\leq 6$.
\end{conjecture}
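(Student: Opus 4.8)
The plan is to attack Conjecture \ref{conj:6NormalConj} not head-on but through the intermediate notion of a non-conflicting nowhere-zero $Z_2\times Z_2$-flow. Since any proper $3$-edge-coloring is already normal (every edge is poor in the sense of Definition \ref{def:poorrich}), the entire difficulty concentrates on non-$3$-edge-colorable graphs. For such a $G$ I would fix a perfect matching $F$, pass to the contraction $G/\overline{F}$ whose vertices are the cycles of the $2$-factor $\overline{F}$ and whose edges are the edges of $F$, and try to produce a nowhere-zero $Z_2\times Z_2$-flow $\theta$ on $G/\overline{F}$ that is non-conflicting with respect to $\overline{F}$. The central step is then the implication \emph{if $G$ carries such a flow with respect to some $F$, then $\chi'_N(G)\le 6$}, and everything else reduces to securing the hypothesis for the graph classes of interest.

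To prove that implication I would build the normal $6$-edge-coloring in two layers. The flow $\theta$ assigns to each edge of $F$ a value in $\{\alpha,\beta,\alpha+\beta\}$; since every vertex $v$ meets exactly one edge of $F$, it inherits a value $\theta(v)$, and the non-conflicting hypothesis asserts precisely that along $\overline{F}$ no value-$\alpha$ vertex is adjacent to a value-$\beta$ vertex. First I would color $F$ with three dedicated colors according to $\theta$-value (equal values receiving equal colors, which is legitimate as $F$ is a matching). Then I would traverse each cycle of $\overline{F}$ and color its edges from the remaining palette so that the coloring stays proper and so that at each vertex the three incident colors realize a local pattern dictated by $\theta$. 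The flow-conservation law at each cycle-vertex of $G/\overline{F}$ governs how the values distribute, forcing every edge of $F$ to be poor or rich. For the edges of $\overline{F}$, the only configuration that could force an abnormal edge ($|S_c(u)\cup S_c(v)|=4$) is an interface between a value-$\alpha$ and a value-$\beta$ vertex; the non-conflicting condition deletes exactly these interfaces, so a finite case check over the value-types of the two endpoints certifies normality in the sense of Definition \ref{def:normal}.

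With the implication in hand, the conjecture for a class reduces to exhibiting, for every member, a perfect matching whose contraction carries a non-conflicting flow. Here it is essential to note that even a plain nowhere-zero $Z_2\times Z_2$-flow on $G/\overline{F}$ is not automatic and depends genuinely on $F$. For claw-free bridgeless cubic graphs and for graphs admitting a $2$-factor with at most two cycles, I would choose $F$ so that $\overline{F}$ has very few and structurally rigid cycles; the smallness of $G/\overline{F}$ then leaves enough freedom to satisfy the parity constraints defining the flow while routing the value-$\alpha$ and value-$\beta$ vertices so that they never meet along $\overline{F}$, which establishes Conjecture \ref{conj:6NormalConj} for these families.

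The hard part, and the reason this plan cannot settle Conjecture \ref{conj:6NormalConj} in full, is the existence question for non-conflicting flows on an arbitrary highly connected snark. Contracting a $2$-factor can destroy flow-existence entirely: replacing each vertex of $P_{10}$ by a triangle produces a bridgeless cubic $G$ whose triangle-$2$-factor contracts back to $P_{10}$, and since $P_{10}$ admits no nowhere-zero $Z_2\times Z_2$-flow, that particular $F$ carries no flow at all. Worse, even when plain flows exist for some $F$ there need not be a non-conflicting one for any choice of $F$, and one can in fact construct infinitely many $2$-edge-connected cubic graphs for which every perfect matching fails. Thus the decisive obstacle is not the coloring construction but proving, for general snarks, that \emph{some} perfect matching admits a non-conflicting flow; beyond the structured classes above this requires an essentially different idea.
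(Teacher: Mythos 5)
The statement you are addressing is a conjecture, and the paper does not prove it either: it only establishes the implication ``non-conflicting flow $\Rightarrow$ normal $6$-edge-coloring'' (Lemma \ref{lem:NonConflictFlowNormal6coloring}) and then verifies the hypothesis for special classes (Theorems \ref{thm:clawfree} and \ref{thm:TwoCycles2factor}), while also exhibiting obstructions (Proposition \ref{prop:PetersenNonConflicFlow}, Theorem \ref{thm:2edgeconnectedexamples}). Your proposal is an accurate reconstruction of exactly this programme, and you are right, for the right reasons, that it cannot close the conjecture: the existence of a non-conflicting flow for \emph{some} perfect matching of an arbitrary snark is precisely the open part, and the paper itself shows that for some $2$-edge-connected graphs every perfect matching fails.

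The one place where your plan genuinely diverges from the paper, and where I think it would break, is the proof of the key implication. You propose to spend three of the six colors on $F$ (one per $\theta$-value) and the remaining three on the cycles of $\overline{F}$. With only three colors available on $\overline{F}$, an edge $uv\in F$ can never be rich: richness would require the four cycle-edges at $u$ and $v$ to carry four distinct colors. So every $F$-edge would have to be poor, which forces the pair of cycle-colors at $u$ to equal the pair at $v$ for every $uv\in F$ --- a rigid global constraint that has nothing to do with the non-conflicting hypothesis and is not satisfiable in general. The paper's construction allocates the colors the other way around: it lifts $\theta$ to a nowhere-zero $Z_2\times Z_2\times Z_2$-flow $\mu$ by setting $\mu(h)=(0,\theta(h))$ on $F$ and giving each cycle of $\overline{F}$ values with first coordinate $1$ (four colors on $\overline{F}$, three on $F$, a normal $7$-edge-coloring), and then merges the two color classes $(0,\alpha)$ and $(0,\beta)$ into one; the non-conflicting condition is invoked only to check that this single merge creates no abnormal edge. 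You should replace your two-layer coloring by this flow lift; the rest of your outline then matches the paper.
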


In this paper, we focus on Conjecture \ref{conj:6NormalConj}. For a given perfect matching $F$ of a bridgeless cubic graph $G$, we introduce the notion of a non-conflicting no-where zero $Z_2\times Z_2$ flow of $G/\overline{F}$ (Definition \ref{def:NonConflictFlow}). We demonstrate the usefulness of this concept by showing that if a bridgeless cubic graph $G$ has such a perfect matching, then it admits a normal 6-edge-coloring (Lemma \ref{lem:NonConflictFlowNormal6coloring}). Moreover, we relate non-conflicting no-where zero $Z_2\times Z_2$ flows in cubic graphs to a recent conjecture of Thomassen about the existence of a pair of edge-disjoint perfect matchings in highly edge-connected regular graphs \cite{Thom20} (see Lemma \ref{lem:5regular5edgeconnected}). Then we obtain the main results of the paper. The first one states that claw-free bridgeless cubic graphs $G$ have a perfect matching $F$ with respect to which $G/\overline{F}$ admits a non-conflicting no-where zero $Z_2\times Z_2$ flow (see Theorem \ref{thm:clawfree}). Moreover, one can find such a perfect matching in bridgeless cubic graphs which have a 2-factor that contains at most two cycles if $G$ is not the Petersen graph (see Theorem \ref{thm:TwoCycles2factor}). In the end of the paper, we construct infinitely many 2-edge-connected cubic graphs $G$ that do not admit a non-conflicting no-where zero $Z_2\times Z_2$ flow with respect to $\overline{F}$ for any perfect matching $F$ of $G$ (see Proposition \ref{prop:PetersenNonConflicFlow} and Theorem \ref{thm:2edgeconnectedexamples}). We conclude the paper in Section \ref{sec:conclusion}, where we summarize the paper and present some questions that could be a direction of future research. Non-defined terms and concepts can be found in \cite{west:1996}.

\section{Main results}
\label{MainSection}

In this section we obtain the main results of the paper. We will need some definitions. Let $T$ be a triangle in a cubic graph $G$ such that each edge of $T$ is of multiplicity one. If $e$ is an edge of $T$, then let $f$ be the edge of $G$ that is incident to a vertex of $T$ and is not adjacent to $e$. The edges $e$ and $f$ will be called opposite. We start with the following proposition:

\begin{proposition}
    \label{prop:MinCounterExample} The minimum counter-example to Conjecture \ref{conj:6NormalConj} is a 3-edge-connected cubic graph of girth at least four.
\end{proposition}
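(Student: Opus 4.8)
The plan is to take a counter\-example $G$ with the fewest vertices and to rule out, in turn, a $2$-edge-cut and a triangle; loops are excluded by hypothesis and, as noted below, $3$-edge-connectivity already forbids parallel edges, so together these two reductions give girth at least four. Throughout, \emph{minimality} means every bridgeless cubic graph with fewer vertices than $G$ admits a normal $6$-edge-coloring, while $G$ itself has $\chi'_{N}(G)\ge 7$ by Conjecture \ref{conj:6NormalConj} failing for it.

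First I would show $G$ is $3$-edge-connected. Since $G$ is bridgeless it suffices to exclude a $2$-edge-cut $\{e_1,e_2\}$ with $e_i=u_iw_i$, the $u_i$ on one side $G_1$ and the $w_i$ on the other side $G_2$. A short argument gives $u_1\neq u_2$ and $w_1\neq w_2$, since otherwise the remaining edge at a common endpoint would be a bridge. One then forms the smaller cubic graphs $H_1=G_1+u_1u_2$ and $H_2=G_2+w_1w_2$; both are bridgeless (a bridge of $H_i$ would descend to a bridge of $G$), so by minimality they carry normal $6$-edge-colorings $c_1,c_2$. The idea is to reassemble a coloring of $G$ by keeping $c_1$ on $G_1$, a color-permuted copy $\pi\circ c_2$ on $G_2$, and giving both $e_1,e_2$ the color $\alpha:=c_1(u_1u_2)$, choosing $\pi$ so that $\pi(c_2(w_1w_2))=\alpha$. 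This forces every edge of $G_1$ and $G_2$ to keep exactly the local color environment, hence the poor/rich status, it had in $H_1$ or $H_2$.

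The only edges whose status is not inherited for free are $e_1,e_2$, and this is the main obstacle. Writing $A_i$ (resp. $B_i$) for the unordered pair of colors of the two non-cut edges at $u_i$ (resp. $w_i$), normality of $u_1u_2$ in $H_1$ and of $w_1w_2$ in $H_2$ forces $A_1,A_2$ (and, after permuting, $\pi(B_1),\pi(B_2)$) to be either equal or disjoint, and $e_i$ is poor or rich precisely when $A_i$ and $\pi(B_i)$ are equal or disjoint. The crux is then a purely combinatorial claim: given these constraints on two pairs drawn from a $6$-element palette together with the single normalization $\pi(\beta)=\alpha$, one can always choose $\pi$ making each of $e_1,e_2$ poor or rich. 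I would settle this by a finite case analysis on whether $A_1=A_2$ or $A_1\cap A_2=\emptyset$ (and likewise for the $B$'s); the tightest case is $A_1=A_2$ with $\pi(B_1),\pi(B_2)$ disjoint, where one must make one of $e_1,e_2$ poor and the other rich while still reserving $\alpha$ for $\pi(\beta)$, and the six colors leave just enough room. This contradiction shows $G$ has no $2$-edge-cut; in particular (for $|V(G)|\ge 4$) it has no parallel edges, since a digon always exposes a $2$-edge-cut.

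Finally I would rule out triangles. If $T=xyz$ is a triangle, then by $3$-edge-connectivity each of $x,y,z$ has a single edge leaving $T$, so contracting $T$ to one vertex $t$ yields a bridgeless cubic graph $H=G/T$ with two fewer vertices. By minimality $H$ has a normal $6$-edge-coloring $c'$, in which the three edges at $t$ get three distinct colors; label them $1,2,3$ by the external edge at $x,y,z$. Lifting $c'$ to $G$ and coloring the triangle by $c(xy)=3,\ c(yz)=1,\ c(zx)=2$ makes $S_{c}(x)=S_{c}(y)=S_{c}(z)=\{1,2,3\}$, so all three triangle edges are poor, and each external edge sees the same color set $\{1,2,3\}$ at its triangle endpoint as it saw at $t$, hence keeps its poor/rich status. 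Thus $G$ would inherit a normal $6$-edge-coloring, a contradiction. Of the two reductions the $2$-edge-cut case is the delicate one; the triangle case is essentially the classical observation that a triangle carrying three distinct external colors behaves like a single vertex in a normal coloring.
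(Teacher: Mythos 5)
Your proof is correct and follows essentially the same route as the paper's: take a minimum counterexample, split along a $2$-edge-cut into two smaller bridgeless cubic graphs and recombine their normal $6$-edge-colorings after permuting colors on one side, then contract triangles and lift the coloring via opposite edges. Your explicit case analysis for choosing the permutation $\pi$ (in particular the case where both added edges $h_1,h_2$ are rich) is actually spelled out more carefully than in the paper, which compresses that recombination into a single sentence.
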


\begin{proof} Let $G$ be a counterexample to the statement minimizing $|V(G)|$. Clearly, $G$ is connected. Let us show that it has no 2-edge-cuts. Assume that $C=\{e_1, e_2\}$ is a 2-edge-cut. Let $G_1$ and $G_2$ be the two smaller bridgeless cubic graphs arising from the two components of $G-C$ by adding one edge connecting the two degree-two vertices in the same component. We let $h_1$ and $h_2$ be the two added edges of these two graphs, respectively. Since the graphs $G_1$ and $G_2$ are smaller, we have that they admit normal 6-edge-colorings $f_1$ and $f_2$ for $j=1,2$. By renaming the colors in $G_2$, we can always assume that the colors of $h_1$ and $h_2$ are the same, moreover, the colors appearing in the ends of $e_1$ are also the same. Now, if we color $e_1$ and $e_2$ with the color of $h_1$, then we will have that $e_1$ is always, poor, moreover if at least one of $h_1$ and $h_2$ is normal, then $e_2$ will also be normal. This means that in the resulting 6-edge-coloring $f$ of $G$ will be normal, too. Thus, $G$ must be 3-connected.

Since we do not have 2-edge-cuts in $G$, we cannot have 2-cycles in $G$. Let us show that $G$ cannot contain a triangle. On the opposite assumption, assume that $G$ contain a triangle $T$. Consider the bridgeless cubic graph $G/T$. Note that it is smaller than $G$. Hence it has a normal 6edge-coloring. We can extend it to a normal 6-edge-coloring of $G$ by taking the colors of edges of $T$ that of the opposite edge. Note that this leads to a normal 6-edge-coloring of $G$ in which the edges of $T$ are poor. Thus, $G$ has girth at least four. The proof is complete.
\end{proof}

\begin{remark}
    We presented the proof of Proposition \ref{prop:MinCounterExample} for the sake of completeness. It is implicit in \cite{MM2020DAM}.
\end{remark}

\begin{remark}
    \label{rem:3edgecuts} In \cite{Jaeger1975}, Jaeger himself proved that the smallest counter-example to Conjectures \ref{conj:P10conj} and \ref{conj:5NormalConj} is a cyclically 4-edge-connected cubic graph. Unfortunately, we are not able to get rid of non-trivial 3-edge-cuts in the smallest counter-example for Conjecture \ref{conj:6NormalConj}. So if one is able to prove Conjecture \ref{conj:6NormalConj} for cyclically 4-edge-connected cubic graphs, it is not obvious how to derive the proof of the full conjecture.
\end{remark} Note that the situation is different for Conjecture \ref{conj:BergeFulkerson}. By answering a question raised back in \cite{Seymour}, M\'{a}\v{c}ajov\'{a} and Mazzuoccolo proved in \cite{MMBergeFulkersonCyclically5} that it suffices to prove Conjecture \ref{conj:BergeFulkerson} for cyclically 5-edge-connected cubic graphs. 

Note that Conjecture \ref{conj:6NormalConj} predicts an upper bound six for $\chi'_N$ in the class of all bridgeless cubic graphs. We would like to mention that seven is relatively easy to prove via flows \cite{Bilkova12,MM2020JGT}. The classical 8-flow theorem \cite{Zhang1997} implies that every bridgeless cubic graph $G$ admits a no-where zero $Z_2\times Z_2 \times Z_2$-flow. It can be easily verified that any such flow is a normal 7-edge-coloring of $G$ (see \cite{Bilkova12,MM2020JGT} for details). In \cite{MM2020JGT}, the authors managed to show that all simple, cubic graphs (not necessarily bridgeless) admit a normal 7-edge-coloring. The proof given in \cite{MM2020JGT} heavily uses flows. So an interesting question is whether flows can be helpful in proving Conjecture \ref{conj:6NormalConj}. If they are then one may wonder what kind of flow results we need in order to prove Conjecture \ref{conj:6NormalConj}. The following approach is due to Mazzuoccolo:

\begin{definition}
    \label{def:NonConflictFlow} Let $G$ be a bridgeless cubic graph, $F$ be a perfect matching of $G$ and $\overline{F}$ be the complementary 2-factor of $F$ in $G$. A no-where zero $Z_2\times Z_2$-flow $\theta$ of $G/\overline{F}$ is called non-conflicting with respect to $F$ (or with respect to $\overline{F}$), if $\overline{F}$ contains no edge $e=uv$, such that $u$ is incident to an edge with $\theta$-value $\alpha$ and $v$ is incident to an edge with $\theta$-value $\beta$. 
\end{definition} In this paper, an edge $e=uv\in E(\overline{F})$, such that $u$ is incident to an edge with $\theta$-value $\alpha$ and $v$ is incident to an edge with $\theta$-value $\beta$ will be called a conflicting edge or just a conflict. 

\begin{remark}
    \label{rem:traingle} Note that if $\overline{F}$ contains a triangle, then $G/\overline{F}$ does not admit a non-conflicting no-where zero $Z_2\times Z_2$-flow with respect to $\overline{F}$.
\end{remark}

\begin{proof} Suppose $\overline{F}$ contains a triangle $T$. Since $G$ is bridgeless, the vertex of $G/\overline{F}$ corresponding to $T$ has degree three. Thus, any no-where zero $Z_2\times Z_2$-flow $\theta$ of $G/\overline{F}$ has exactly one edge of values $\alpha$, $\beta$ and $\alpha+\beta$. Hence $T$ contains a conflict. The proof is complete.
\end{proof}

Lemma \ref{lem:NonConflictFlowNormal6coloring} proved below demonstrates the usefulness of non-conflicting flows for Conjecture \ref{conj:6NormalConj}.

\begin{lemma}
    \label{lem:NonConflictFlowNormal6coloring} Let $G$ be a bridgeless cubic graph and let $F$ be a perfect matching in it. If $G/\overline{F}$ admits a non-conflicting no-where zero $Z_2\times Z_2$ flow with respect to $\overline{F}$, then $\chi'_N(G)\leq 6$. 
\end{lemma}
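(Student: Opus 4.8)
The plan is to promote $\theta$ to a nowhere-zero flow valued in the larger group $Z_2\times Z_2\times Z_2=\langle\alpha,\beta,\gamma\rangle$ on the whole of $G$, read its nonzero values as a normal $7$-edge-coloring, and then \emph{merge} the two colors $\alpha$ and $\beta$ into a single color. The non-conflicting hypothesis is exactly what makes this merge legitimate and produces a normal $6$-edge-coloring.

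First I would extend $\theta$ to a map $\Phi\colon E(G)\to Z_2\times Z_2\times Z_2$. On every edge of $F$ set $\Phi:=\theta$, viewing $Z_2\times Z_2=\langle\alpha,\beta\rangle$ as the first two coordinates, so these values have $\gamma$-coordinate $0$. On each cycle $D=v_1v_2\cdots v_kv_1$ of $\overline F$, writing $t_{v_i}$ for the $\theta$-value of the $F$-edge at $v_i$, I propagate the cycle-edge values by the rule $\Phi(v_iv_{i+1})=\Phi(v_{i-1}v_i)+t_{v_i}$ forced by flow conservation at $v_i$, starting from $\Phi(v_kv_1):=\gamma$. Since $\theta$ is a flow on $G/\overline F$, the sum $\sum_i t_{v_i}$ equals $0$, so the assignment closes up consistently around $D$; moreover every cycle-edge gets a value of the form $\gamma+(\text{element of }\langle\alpha,\beta\rangle)$, hence lies in the coset $\gamma+\langle\alpha,\beta\rangle$ and is in particular nonzero. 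Thus $\Phi$ is a nowhere-zero $Z_2\times Z_2\times Z_2$-flow whose $F$-edges lie in $\langle\alpha,\beta\rangle$ and whose $\overline F$-edges lie in $\gamma+\langle\alpha,\beta\rangle$.

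Next I would record the standard fact that reading the values of $\Phi$ as colors gives a normal edge-coloring: at each vertex the three incident values are nonzero and sum to $0$, hence are pairwise distinct (the coloring is proper), and for an edge $uv$ of color $c$ one has $S_\Phi(u)=\{c,a,b\}$, $S_\Phi(v)=\{c,a',b'\}$ with $a+b=c=a'+b'$, so a second common color would force $b=c+a=c+a'=b'$ and make the palettes equal; hence every edge is poor or rich. Note that the palette at a vertex $v$ has the shape $\{t_v,\ \gamma+u_v,\ \gamma+u_v+t_v\}$ for some $u_v\in\langle\alpha,\beta\rangle$: one color in $\langle\alpha,\beta\rangle$ on the $F$-edge and two colors in $\gamma+\langle\alpha,\beta\rangle$ on the cycle-edges. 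Finally I would define a $6$-edge-coloring $c$ by recoloring every $\alpha$-edge and every $\beta$-edge with a single new color $\ast$ and leaving the colors $\alpha+\beta,\gamma,\gamma+\alpha,\gamma+\beta,\gamma+\alpha+\beta$ unchanged, using in total $6$ colors. Since $\alpha\in S_\Phi(w)$ (resp.\ $\beta\in S_\Phi(w)$) if and only if $w$ is incident to an $\alpha$-valued (resp.\ $\beta$-valued) $F$-edge, and no vertex is incident to both, no vertex has two edges recolored to $\ast$, so $c$ stays proper. For normality, merging can turn a previously rich edge $xy$ abnormal only when $xy$ carries color $\alpha$ at one end (not on $xy$ itself) and color $\beta$ at the other, i.e.\ when $xy\in E(\overline F)$ joins a vertex incident to an $\alpha$-edge to a vertex incident to a $\beta$-edge; poor edges stay poor and every other rich edge stays rich. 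Such an $xy$ is precisely a conflicting edge, which the hypothesis forbids, so $c$ is a normal $6$-edge-coloring and $\chi'_N(G)\le 6$.

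The crux — and the only place the hypothesis enters — is this last identification. A generic nowhere-zero $Z_2\times Z_2\times Z_2$-flow genuinely uses all seven colors when $G$ is not $3$-edge-colorable (one cannot avoid any single value), so the reduction to six colors cannot be extracted from the flow alone; the non-conflicting condition is exactly the assertion that the two color classes $\alpha$ and $\beta$ never meet across an edge of $\overline F$, which is precisely what is needed to collapse them into one color while keeping the coloring proper and normal. I expect the main care to be in verifying cleanly that the only abnormality that merging could create is along an $\alpha$--$\beta$ edge of $\overline F$, and in checking that the flow extension of the first step always exists (the choice of the starting value $\gamma$ on each cycle).
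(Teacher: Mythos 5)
Your proposal is correct and follows essentially the same route as the paper: lift $\theta$ to a nowhere-zero $Z_2\times Z_2\times Z_2$-flow of $G$ by putting the $F$-edges in the subgroup $\langle\alpha,\beta\rangle$ and the $\overline F$-edges in the complementary coset, read it as a normal $7$-edge-coloring, and then identify the colors $\alpha$ and $\beta$, with the non-conflicting hypothesis guaranteeing that no rich edge of $\overline F$ becomes abnormal. Your write-up is in fact somewhat more explicit than the paper's about why the merge preserves properness and normality, but the underlying argument is the same.
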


\begin{proof} We follow the approach of the proof of Lemma 5.2 in \cite{HolySkoJCTB2004}. Suppose $\theta$ is given. Consider a nowhere zero $Z_2\times Z_2\times Z_2$-flow $\mu$ of $G$ obtained from $\theta$ as follows: for any edge $h\in M$, we define the triple $\mu(h)$ as follows: $\mu(h)=(0,\theta(h))$. Then let $C$ be any cycle of $\overline{M}$. Let $x_0$ be any element of $Z_2\times Z_2\times Z_2$, whose first coordinate is $1$. Assign $x_0$ to an edge of $C$. Then observe that the rest of the values of edges of $C$ are defined uniquely in $\mu$. Moreover, the first coordinate of the values of $\mu$ on $C$ is $1$. Hence for any edges $h_1\in M$ and $h_2\in \overline{M}$, we have $\mu(h_1)\neq \mu(h_2)$. Also observe that for different cycles of $\overline{M}$ we can choose $x_0$ differently. It can be easily checked that $\mu$ is a nowhere zero $Z_2\times Z_2\times Z_2$-flow of $G$. Hence, it is a normal 7-edge-coloring of $G$ as we mentioned before. Now let us consider an edge-coloring of $G$ obtained from $\mu$ by changing the values of all edges $e$ with $\mu(e)=(0, \beta)$ to $\mu(e)=(0, \alpha)$. Note that the resulting coloring is not a flow, however it is a normal 6-edge-coloring since $\theta$ was non-conflicting by assumption. The proof is complete.
\end{proof} As an approach towards Conjecture \ref{conj:6NormalConj}, in \cite{MM2020DAM} the following conjecture is presented:

\begin{conjecture}
\label{conj:NonConflictingFlow6} (\cite{MM2020DAM}) Let $G$ be a 3-edge-connected cubic graph different from the Petersen graph. Then $G$ admits a nowhere zero $Z_2\times Z_2\times Z_2$-flow $f$, such that there are two elements $x, y \in Z_2\times Z_2\times Z_2$ with
\begin{enumerate}
    \item [(1)] $f^{-1}(\{x, y \})$ is a matching in $G$,
    
    \item [(2)] there is no edge $e=uv$ of $G$, such that $u$ is incident to an edge $e_u$ and $v$ is incident to an edge $e_v$ with $f(e_u)=x$ and $f(e_v)=y$.
\end{enumerate}
\end{conjecture}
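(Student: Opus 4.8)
The plan is to recast Conjecture~\ref{conj:NonConflictingFlow6} in the language of non-conflicting flows and then attack the resulting combinatorial statement. The first step is the following reformulation, implicit in the proof of Lemma~\ref{lem:NonConflictFlowNormal6coloring}: if $G$ has a perfect matching $F$ for which $G/\overline{F}$ admits a non-conflicting no-where zero $Z_2\times Z_2$-flow $\theta$ with respect to $\overline{F}$, then the no-where zero $Z_2\times Z_2\times Z_2$-flow $\mu$ constructed there, together with $x=(0,\alpha)$ and $y=(0,\beta)$, witnesses the conjecture. Indeed $\mu^{-1}(\{x,y\})\subseteq F$ is a matching, and since the values $x$ and $y$ occur only on edges of $F$, condition~(2) reduces exactly to the non-conflicting condition of Definition~\ref{def:NonConflictFlow}. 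Hence it suffices to show that every $3$-edge-connected cubic graph $G\neq P_{10}$ has a perfect matching $F$ for which $G/\overline{F}$ carries a non-conflicting flow.

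Next we would make this purely combinatorial. Fix a $2$-factor $\overline{F}$. A no-where zero $Z_2\times Z_2$-flow of $G/\overline{F}$ is precisely an assignment of a color from $\{\alpha,\beta,\alpha+\beta\}$ to every edge of $F$ such that, for each cycle $C$ of $\overline{F}$, the colors of the $F$-edges leaving $C$ sum to $0$; the edges of $F$ with both ends on $C$ become loops and may be ignored. Since every vertex meets exactly one edge of $F$, such an assignment colors the vertices of $G$ so that the two ends of each $F$-edge share a color, and the non-conflicting requirement becomes the local condition that \emph{no edge of $\overline{F}$ joins an $\alpha$-colored vertex to a $\beta$-colored one}. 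The problem thus decomposes into a global family of $Z_2\times Z_2$ parity constraints, one per cycle of $\overline{F}$, together with a proper-coloring-type condition along the edges of $\overline{F}$.

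For the core argument we would build on the special cases established in this paper. Theorems~\ref{thm:clawfree} and~\ref{thm:TwoCycles2factor} already settle the reformulated statement for claw-free graphs and for graphs admitting a $2$-factor with at most two cycles, so the remaining difficulty concerns graphs that are not claw-free and in which every $2$-factor has at least three cycles. The second result suggests choosing $\overline{F}$ with few and long cycles: on a long cycle the single parity constraint leaves many degrees of freedom, which one can then spend to redistribute the colors and break every $\alpha$-$\beta$ conflict. A complementary tactic is to start from any no-where zero $Z_2\times Z_2\times Z_2$-flow of $G$, which exists by the classical $8$-flow theorem \cite{Zhang1997}, and to correct it by adding flows supported on cycles until conditions~(1) and~(2) hold; here the edge-disjoint perfect matchings coming from the circle of ideas around Thomassen's conjecture \cite{Thom20} (see the discussion around Lemma~\ref{lem:5regular5edgeconnected}) could supply the structure needed for condition~(1).

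\textbf{The main obstacle} will be to satisfy the global parity constraints and the local non-conflict condition simultaneously on graphs all of whose $2$-factors contain many short cycles. A single short cycle can force a conflict through its parity constraint alone---this is exactly how a triangle obstructs a flow in Remark~\ref{rem:traingle}---and one cannot bypass the difficulty by induction through small cuts, since Remark~\ref{rem:3edgecuts} records that non-trivial $3$-edge-cuts are not known to be reducible for Conjecture~\ref{conj:6NormalConj}. Any successful argument must therefore make genuine use of both $3$-edge-connectivity and the hypothesis $G\neq P_{10}$, the latter being a real exception by Proposition~\ref{prop:PetersenNonConflicFlow}. We expect that the decisive steps will be to prove the cyclically $4$-edge-connected case and, separately, to understand how non-trivial $3$-edge-cuts interact with a well-chosen $2$-factor.
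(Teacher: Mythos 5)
The statement you are trying to prove is Conjecture~\ref{conj:NonConflictingFlow6}, which the paper states as an open conjecture from \cite{MM2020DAM} and does not prove; so there is no proof in the paper to compare against. What you have written is a research plan rather than a proof: the entire combinatorial core is deferred to the paragraph beginning ``The main obstacle,'' where you yourself concede that you do not know how to satisfy the parity constraints and the non-conflict condition simultaneously when every $2$-factor has many short cycles, nor how to handle non-trivial $3$-edge-cuts. Nothing before that paragraph does more than restate the problem (correctly) in the language of Definition~\ref{def:NonConflictFlow} and record the special cases already proved in the paper (Observations~\ref{obs:2factorEvenCycles}--\ref{obs:3edgecolorablecubic}, Theorems~\ref{thm:clawfree} and~\ref{thm:TwoCycles2factor}). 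So the proposal contains a genuine, explicitly acknowledged gap and cannot be accepted as a proof.

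There is also a strategic problem with your opening reduction that you should be aware of. You reduce Conjecture~\ref{conj:NonConflictingFlow6} to the claim that every $3$-edge-connected cubic graph $G\neq P_{10}$ has a perfect matching $F$ such that $G/\overline{F}$ admits a non-conflicting no-where zero $Z_2\times Z_2$-flow. That implication is correct (it is exactly Remark~\ref{rem:DAMpaperConjecture4} read in reverse: the flow $\mu$ built in the proof of Lemma~\ref{lem:NonConflictFlowNormal6coloring}, with $x=(0,\alpha)$ and $y=(0,\beta)$, witnesses the conjecture). But the reduction is strictly lossy: in Conjecture~\ref{conj:NonConflictingFlow6} the set $f^{-1}(\{x,y\})$ need only be a matching, whereas your target forces the much more rigid structure in which all of $F$ carries first coordinate $0$ and all of $\overline{F}$ carries first coordinate $1$. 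The paper's author explicitly suspects that the stronger target is false for $3$-edge-connected graphs beyond $P_{10}$, and Conjecture~\ref{conj:Cyclically6edgeconnectedExample} asserts that there are infinitely many cyclically $6$-edge-connected cubic graphs with no non-conflicting flow with respect to any perfect matching; Theorem~\ref{thm:2edgeconnectedexamples} already realizes this phenomenon at connectivity two. If that is right, your route can at best prove Conjecture~\ref{conj:NonConflictingFlow6} for subclasses (as Theorems~\ref{thm:clawfree} and~\ref{thm:TwoCycles2factor} do), and any complete proof must exploit the extra freedom in the $Z_2\times Z_2\times Z_2$-flow formulation that your first step discards. A small additional inaccuracy: in your reformulation the chords of the cycles of $\overline{F}$ (loops of $G/\overline{F}$) cannot simply ``be ignored''---they carry nonzero flow values and can create conflicts, which is precisely why the proof of Theorem~\ref{thm:clawfree} reassigns them the value $\alpha+\beta$.
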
 \cite{MM2020DAM} shows that Conjecture \ref{conj:NonConflictingFlow6} implies Conjecture \ref{conj:6NormalConj}.

\begin{remark}\label{rem:DAMpaperConjecture4}
    Note that the no-where zero $Z_2\times Z_2\times Z_2$ flow $\mu$ that we constructed in the proof of Lemma \ref{lem:NonConflictFlowNormal6coloring} satisfies the assumption of Conjecture \ref{conj:NonConflictingFlow6}.
\end{remark}

Lemma \ref{lem:NonConflictFlowNormal6coloring} demonstrates the usefulness of non-conflicting no-where zero $Z_2\times Z_2$ flows for obtaining normal 6-edge-colorings of cubic graphs. Rather surprisingly, as our next statement demonstrates, they are useful for a recent conjecture of Thomassen \cite{Thom20} that deals with the problem of existence of pairs of edge-disjoint perfect matchings in highly connected regular graphs. In \cite{Thom20}, Thomassen conjectured that there is an integer $r_0$, such that any $r$-regular $r$-edge-connected graph on even number of vertices with $r\geq r_0$ contains a pair of edge-disjoint perfect matchings. Note that snarks (graphs like $P_{10}$) demonstrate that $r_0\geq 4$. Now, using completely different approaches Rizzi in \cite{Rizzi1999} and Mazzuoccolo in \cite{Mazz13} constructed a $4$-regular 4-edge-connected graph on even number of vertices in which every pair of perfect matchings have a common edge. The two constructions lead to the same 4-regular graph up to graph isomorphisms. This implies that $r_0\geq 5$. Now, we prove a statement that shows the usefulness of non-conflicting no-where zero $Z_2\times Z_2$ flows for this conjecture when $r=5$.

\begin{lemma}
    \label{lem:5regular5edgeconnected} Let $H$ be any 5-regular 5-edge-connected graph. Consider a bridgeless cubic $G$ obtained from $H$ by replacing every vertex of $H$ with a cycle of length five. Let $\overline{F}$ be the 2-factor of $G$ comprised of these five cycles corresponding to vertices of $H$. If $G$ admits a non-conflicting no-where zero $Z_2\times Z_2$ flow with respect to $\overline{F}$, then $H$ contains a pair of edge-disjoint perfect matchings.
\end{lemma}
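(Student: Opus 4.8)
The plan is to read the two edge-disjoint perfect matchings directly off the flow $\theta$, after using the non-conflicting hypothesis to pin down a very rigid local structure. First I would note that contracting the five $5$-cycles turns $G/\overline{F}$ back into a copy of $H$, with the matching $F$ becoming exactly the edge set of $H$; hence $\theta$ is at the same time a nowhere-zero $Z_2\times Z_2$-flow of $H$, and the $\theta$-value of an edge $e$ of $H$ is precisely the label carried by each of the two cubic vertices of $G$ that are endpoints of the corresponding $F$-edge. For a vertex $w$ of $H$ I would write $a,b,c$ for the numbers of its five incident edges whose $\theta$-value is $\alpha$, $\beta$, $\alpha+\beta$ respectively, so that $a+b+c=5$.

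Next I would run the parity computation forced by the flow condition at $w$. The vanishing of the $Z_2\times Z_2$-sum of the incident values reads $(a+c)\alpha+(b+c)\beta=0$, which forces $a+c$ and $b+c$ to be even, hence $a\equiv b\equiv c \pmod 2$; combined with $a+b+c=5$ this makes all three numbers odd, so $\{a,b,c\}=\{1,1,3\}$ as a multiset. In other words, at every vertex of $H$ exactly one of the three flow-values is taken on three incident edges and each of the other two values on a single incident edge.

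The crucial step, and the only place where the non-conflicting hypothesis is used, is to show that the value occurring three times must be $\alpha+\beta$. I would argue by contradiction: suppose at some $w$ the value $\alpha$ is taken three times (the case $\beta$ is symmetric). Then among the five cubic vertices of the corresponding $5$-cycle of $G$, three carry label $\alpha$, one carries $\beta$, and one carries $\alpha+\beta$. The unique $\beta$-labeled vertex has two distinct neighbours on this $5$-cycle, and non-conflictingness forbids either of those neighbours from being $\alpha$-labeled; but the only non-$\alpha$ vertex on the cycle other than the $\beta$-vertex itself is the single $(\alpha+\beta)$-labeled one, so at least one neighbour of the $\beta$-vertex is $\alpha$-labeled, giving a conflicting edge of $\overline{F}$ — a contradiction. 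Hence $c=3$ and $a=b=1$ at every vertex of $H$.

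Finally I would conclude: since every vertex of $H$ is incident with exactly one $\alpha$-edge and exactly one $\beta$-edge, the colour classes $M_\alpha=\theta^{-1}(\alpha)$ and $M_\beta=\theta^{-1}(\beta)$ are each perfect matchings of $H$, and they are edge-disjoint since they consist of edges with different $\theta$-values. I expect the only real subtlety to be the bookkeeping of the local picture around each $5$-cycle, namely correctly identifying the labels of the cubic vertices with the flow-values on $H$ and checking that the definition of a conflict is exactly the cyclic-adjacency condition used above; once the parity count $\{a,b,c\}=\{1,1,3\}$ and the forced choice $c=3$ are in hand, the extraction of the two matchings is immediate.
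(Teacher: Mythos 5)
Your proposal is correct and follows essentially the same route as the paper's proof: the parity count at each contracted degree-$5$ vertex forces the value distribution $\{1,1,3\}$, the non-conflicting condition rules out $\alpha$ or $\beta$ occurring three times around a $5$-cycle, and the unique $\alpha$- and $\beta$-edges at each vertex yield the two edge-disjoint perfect matchings. You merely spell out the details that the paper dismisses with ``it can be easily checked.''
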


\begin{proof} Suppose $\theta$ is a non-conflicting no-where zero $Z_2\times Z_2$ flow of $G$ with respect to $\overline{F}$. Since all cycles of $\overline{F}$ are odd (in fact, they are of length five), we have that there should be odd number of edges with $\theta$-value $\alpha$, $\beta$ and $\alpha+\beta$. In particular, there should be edges $e\in F$ and $f\in F$ such that $\theta(e)=\alpha$ and $\theta(f)=\beta$. If around some five cycle of $\overline{F}$ there are at least three edges with $\theta$-value $\alpha$ or at least three edges with $\theta$-value $\beta$, then since our cycles of $\overline{F}$ are of length five, it can be easily checked that there is a conflict. Thus, around every five cycle of $\overline{F}$, there are exactly one edge of $\theta$-value $\alpha$ and one edge of $\theta$-value $\beta$. Now, note that these edges induce a pair of edge-disjoint perfect matchings in $H$. The proof is complete.
\end{proof}

Let us note that it is an open problem whether there is a 5-edge-connected 5-regular graph that is not 5-edge-colorable \cite{SidmaPerfects}. Next, we discuss non-conflicting no-where zero $Z_2\times Z_2$ flows in bipartite graphs and 3-edge-colorable cubic graphs. We start with:

\begin{observation}
    \label{obs:2factorEvenCycles} Let $G$ be a bridgeless cubic graph and let $F$ be a perfect matching of $G$. If all cycles of $\overline{F}$ are even then $G/\overline{F}$ admits a non-conflicting no-where zero $Z_2\times Z_2$ flow with respect to $\overline{F}$. 
\end{observation}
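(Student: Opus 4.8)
The plan is to avoid any case analysis and simply write down one explicit flow that works: assign the value $\alpha$ to every edge of $G/\overline{F}$. Recall that the edges of $G/\overline{F}$ are exactly the edges of $F$, since the $2$-factor $\overline{F}$ is contracted away. So I would set $\theta(e)=\alpha$ for every $e\in F$ and then verify the two required properties.

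First I would check that $\theta\equiv\alpha$ is indeed a no-where zero $Z_2\times Z_2$ flow of $G/\overline{F}$. Nowhere-zeroness is immediate because $\alpha\neq 0$, so the only thing to verify is Kirchhoff's conservation law at each vertex. Since every element of $Z_2\times Z_2$ is its own inverse, no orientation is needed and conservation at a vertex just says that the sum of the $\theta$-values of the incident edges is $0$. Each vertex of $G/\overline{F}$ is a cycle $C$ of $\overline{F}$, and its incident edges are the edges of $F$ meeting $C$; because $F$ is a matching, each vertex of $C$ lies on exactly one such edge.

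The key step, and the only place the hypothesis is used, is to control the parity of this contribution. An edge of $F$ with both ends on $C$ becomes a loop at the contracted vertex and contributes $\alpha+\alpha=0$, while consuming two vertices of $C$; each remaining vertex of $C$ carries one edge of $F$ leaving $C$, contributing $\alpha$. Hence the flow sum at $C$ equals $(\,|C|-2k\,)\cdot\alpha$, where $k$ is the number of chords of $C$ lying in $F$. Since $|C|$ is even by assumption, this is an even multiple of $\alpha$, i.e. $0$, so conservation holds at every vertex and $\theta$ is a legitimate flow.

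Finally, the non-conflicting condition is immediate: a conflict requires an edge $uv\in\overline{F}$ whose endpoint $u$ lies on an $F$-edge of value $\alpha$ and whose endpoint $v$ lies on an $F$-edge of value $\beta$, but under $\theta$ no edge ever takes the value $\beta$. I expect the only genuine subtlety to be the bookkeeping for loops created by $F$-edges that are chords of a cycle of $\overline{F}$; once one observes that such loops cancel in pairs and leave the parity governed solely by $|C|$, the even-cycle hypothesis does all the work.
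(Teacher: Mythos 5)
Your proposal is correct and is essentially the paper's own argument: the paper also assigns a single nonzero constant value $x\in\{\alpha,\beta,\alpha+\beta\}$ to every edge of $G/\overline{F}$ and invokes the evenness of the cycles together with $x+x=0$ to verify conservation. Your write-up merely spells out the parity bookkeeping for chords/loops that the paper leaves implicit, so nothing further is needed.
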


\begin{proof} Let $x\in \{\alpha, \beta, \alpha+\beta\}$. For any edge $e\in E(G/\overline{F})$, set $\theta(e)=x$. Since all cycles of $\overline{F}$ are even and $x+x=0$, we have that $\theta$ is a non-conflicting no-where zero $Z_2\times Z_2$ flow with respect to $\overline{F}$. The proof is complete.
\end{proof}

\begin{observation}
    \label{obs:cubicbips} For every perfect matching $F$ of an arbitrary bipartite cubic graph $G$ $G/\overline{F}$ admits a non-conflicting no-where zero $Z_2\times Z_2$ flow with respect to $\overline{F}$. 
\end{observation}

\begin{proof} Since $G$ is bipartite, all cycles in $\overline{F}$ are even. Hence the statement follows from Observation \ref{obs:2factorEvenCycles}. The proof is complete.
\end{proof}

\begin{observation}
    \label{obs:3edgecolorablecubic} Every 3-edge-colorable cubic graph $G$ has a perfect matching $F$ such that $G/\overline{F}$ admits a non-conflicting no-where zero $Z_2\times Z_2$ flow with respect to $\overline{F}$. 
\end{observation}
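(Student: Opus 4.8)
The plan is to exploit the structure that a proper 3-edge-coloring imposes on the complementary 2-factor. First I would fix a proper 3-edge-coloring $c : E(G) \to \{1,2,3\}$, which exists precisely because $G$ is assumed to be 3-edge-colorable. Since $G$ is cubic, each of the three color classes is a perfect matching of $G$, so I would simply take $F$ to be the class of edges colored $3$.

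The key step is to identify the cycle structure of $\overline{F}$. By construction, $\overline{F}$ is exactly the set of edges colored $1$ or $2$. Each vertex $v$ of $G$ is incident to one edge of each color, so in $\overline{F}$ every vertex is incident to exactly one edge of color $1$ and exactly one edge of color $2$; in particular every vertex of $\overline{F}$ has degree two, confirming that $\overline{F}$ is indeed a 2-factor. The crucial observation is that, as we traverse any cycle of $\overline{F}$, consecutive edges must receive distinct colors (they are adjacent and $c$ is proper), and since only colors $1$ and $2$ occur, the colors must alternate $1,2,1,2,\dots$ around the cycle. An alternating pattern forces the cycle to have even length.

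Having established that every cycle of $\overline{F}$ is even, I would finish by invoking Observation \ref{obs:2factorEvenCycles}, which guarantees that $G/\overline{F}$ admits a non-conflicting no-where zero $Z_2\times Z_2$ flow with respect to $\overline{F}$ whenever all cycles of $\overline{F}$ are even. This completes the argument.

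There is essentially no genuine obstacle here: the entire content reduces to the elementary fact that the union of two color classes of a proper edge-coloring yields even (alternating) cycles. The only point deserving a line of care is the verification that $\overline{F}$ is a legitimate 2-factor whose cycles alternate in color, after which the previously proved Observation \ref{obs:2factorEvenCycles} does all the remaining work.
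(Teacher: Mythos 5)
Your proposal is correct and follows essentially the same route as the paper: choose $F$ to be one color class of a proper 3-edge-coloring, observe that the remaining two classes form a 2-factor with alternating (hence even) cycles, and conclude via Observation \ref{obs:2factorEvenCycles}. The paper's proof is just a terser version of yours, leaving the even-cycle verification implicit.
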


\begin{proof} If $G$ is 3-edge-colorable, then there is a perfect matching $F$ such that $\overline{F}$ has only even cycles. Hence the statement follows from Observation \ref{obs:2factorEvenCycles}. The proof is complete.
\end{proof}

Recall that a graph $G$ is claw-free, if it does not contain four vertices, such that the subgraph of $G$ induced on these vertices is isomorphic to $K_{1,3}$. Next, we are going to show that claw-free bridgeless cubic graphs have a perfect matching $F$ such that $G/\overline{F}$ admits a non-conflicting no-where zero $Z_2\times Z_2$ flow with respect to $\overline{F}$. Since in \cite{MM2020DAM}, it is shown that all such graphs admit a normal 6-edge-coloring, our statement is going to strengthen the corresponding result from \cite{MM2020DAM}. Proving upper bounds for $\chi'_N$ in this class is important as if one shows that all claw-free simple bridgeless cubic graphs admit a normal 5-edge-coloring then Conjecture \ref{conj:5NormalConj} follows (see \cite{MM2020DAM} for all details on this). 

We will need some results on claw-free simple cubic graphs. In \cite{ChudSeyClawFreeChar}, arbitrary claw-free graphs are characterized. In \cite{sang-il_oum:2011}, Oum has characterized simple, claw-free bridgeless cubic graphs. In order to formulate Oum's result, we need some definitions. In a claw-free simple cubic graph $G$ any vertex belongs to one, two, or three triangles. If a vertex $v$ belongs to three triangles of $G$, then the component of $G$ containing $v$ is isomorphic to $K_4$ (Figure \ref{fig:K4}). An induced subgraph of $G$ that is isomorphic to $K_4-e$ is called a diamond \cite{sang-il_oum:2011}. It can be easily checked that in a claw-free cubic graph no two diamonds intersect.  

\begin{figure}[ht]
\centering
\begin{minipage}[b]{.5\textwidth}
  \begin{center}
\begin{tikzpicture}[scale=0.35]

 \tikzstyle{every node}=[circle, draw, fill=black!,
                        inner sep=0pt, minimum width=4pt]
  
  \node[circle,fill=black,draw] at (-2,2) (n2) {};

   \node[circle,fill=black,draw] at (-2,-2) (n3) {};

    \node[circle,fill=black,draw] at (-6,2) (n4) {};
    
    \node[circle,fill=black,draw] at (-6,-2) (n5) {};

 \draw (n2)--(n3);
 
 \draw (n2)--(n4);
 \draw (n3)--(n4);
 \draw (n3)--(n5);
 \draw (n2)--(n5);
 
 \draw (n4)--(n5);

\end{tikzpicture}
\end{center}
\caption{The graph $K_4$.} \label{fig:K4}
\end{minipage}%
\begin{minipage}[b]{.5\textwidth}

  \begin{center}
	
	\tikzstyle{every node}=[circle, draw, fill=black!50,
                        inner sep=0pt, minimum width=4pt]
	
		\begin{tikzpicture}

			

			\node[circle,fill=black,draw] at (-6, -0.5) (n2) {};
																	
			\node[circle,fill=black,draw] at (-5,-0.5) (n3) {};

			\path[every node]

			(n2) edge (n3)
			edge [bend left] (n3)
                edge [bend right] (n3)
																							  
			;
		\end{tikzpicture}
																
	\end{center}
								
	\caption{The graph $K_{2}^{3}$.}
	\label{fig:K23}
\end{minipage}
\end{figure}

A string of diamonds of $G$ is a maximal sequence $F_{1},...,F_{k}$ of diamonds, in which $F_{i}$ has a vertex adjacent to a vertex of $F_{i+1}$, $1\leq i \leq k-1$.  A string of diamonds has exactly two vertices of degree two, which are called the head and the tail of the string. Replacing an edge $e = uv$ with a string of diamonds with the head $x$ and the tail $y$ is to remove $e$ and add edges $(u,x)$ and $(v,y)$.

If $G$ is a connected claw-free simple cubic graph such that each vertex lies in a diamond, then $G$ is called a ring of diamonds. It can be easily checked that each vertex of a ring of diamonds lies in exactly one diamond. As in \cite{sang-il_oum:2011}, we require that a ring of diamonds contains at least two diamonds.

\begin{proposition}\label{prop:OumClawfreebridgelessCharac} (Oum, \cite{sang-il_oum:2011}) $G$ is a connected claw-free simple bridgeless cubic graph, if and only if
\begin{enumerate}
    \item [(1)] $G$ is isomorphic to $K_4$, or
    
    \item [(2)] $G$ is a ring of diamonds, or
    
    \item [(3)] there is a connected bridgeless cubic graph $H$, such that $G$ can be obtained from $H$ by replacing some edges of $H$ with strings of diamonds, and by replacing any vertex of $H$ with a triangle.
\end{enumerate}
\end{proposition}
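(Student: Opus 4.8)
The plan is to prove Proposition~\ref{prop:OumClawfreebridgelessCharac} by establishing both directions, with the bulk of the work in the forward direction (a connected claw-free simple bridgeless cubic graph must have one of the three forms). The backward direction is routine: I would check that $K_4$ is claw-free bridgeless cubic (immediate), that a ring of diamonds is (each vertex lies in a diamond, so its neighborhood never induces a $K_{1,3}$, and the cyclic gluing keeps it bridgeless), and that operation (3) preserves all four properties---replacing a vertex by a triangle and an edge by a string of diamonds introduces only new triangles/diamonds, so no claw is created and no bridge appears since $H$ was bridgeless. These verifications are local and I would state them briefly.

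For the forward direction I would exploit the key structural fact already recalled in the text: in a claw-free simple cubic graph \emph{every} vertex lies in at least one triangle (otherwise its three neighbors, being pairwise nonadjacent, would induce a claw). First I would dispose of the case where some vertex lies in three triangles: then its component is $K_4$, giving case~(1), and connectivity finishes it. So I may assume every vertex lies in exactly one or two triangles. The next step is to understand the two-triangle vertices: a vertex $v$ in two triangles, together with the shared edges, forces a diamond ($K_4-e$) through $v$; here I would use the remark that in a claw-free cubic graph no two diamonds intersect, so the diamonds partition the two-triangle vertices into disjoint blocks.

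The heart of the argument is then a contraction/reduction step. I would form an auxiliary graph $H$ by contracting each maximal string of diamonds to a single edge and each remaining triangle to a single vertex, and argue that $H$ is a connected bridgeless cubic (pseudo)graph from which $G$ is recovered by exactly the operations in~(3). The verification splits according to whether every vertex of $G$ lies in a diamond---in which case the diamonds chain around cyclically and one lands in case~(2), the ring of diamonds (using that a ring must contain at least two diamonds, the degenerate small cases being folded into $K_4$)---or whether some triangle is not part of any diamond, in which case contracting triangles to vertices and strings of diamonds to edges yields the graph $H$ of case~(3). The main obstacle I anticipate is the bookkeeping that makes this contraction well-defined and shows $H$ is genuinely cubic and bridgeless: I would need to check that the degree-two head and tail vertices of each string match up correctly with the triangle-vertices of $H$, that contracting never creates a bridge (which follows from $G$ being bridgeless together with the fact that a diamond or triangle is internally $2$-edge-connected), and that distinct strings/triangles do not overlap---this last point relying squarely on the non-intersection of diamonds. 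Handling the boundary cases cleanly (a single ring versus a graph with nontrivial $H$, and ensuring the ``at least two diamonds'' convention) is where I would be most careful.
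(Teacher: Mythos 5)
The paper does not prove this proposition at all: it is quoted verbatim from Oum's paper \cite{sang-il_oum:2011} as a known structural result, so there is no in-paper proof to compare against. Your outline is nonetheless a correct reconstruction of the standard (Oum's) argument --- every vertex of a claw-free cubic graph lies in a triangle, three triangles force $K_4$, two triangles force a diamond, diamonds are pairwise disjoint, and contracting maximal strings of diamonds to edges and the remaining (necessarily disjoint) triangles to vertices yields the bridgeless cubic graph $H$, with the all-diamond case collapsing to a ring --- and the bookkeeping issues you flag (maximality of strings, bridgelessness of $H$, the degenerate single-diamond case being $K_4$) all resolve in the way you anticipate.
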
 We would like to present the following simple extension of Proposition \ref{prop:OumClawfreebridgelessCharac} when $G$ may have a parallel edge or just a cycle of length two.

\begin{proposition}\label{prop:AnushVahanClawfreebridgelessCharac} (\cite{HakobyanAUJC}) $G$ is a connected claw-free bridgeless cubic graph, if and only if
\begin{enumerate}
    \item [(1)] $G$ is isomorphic to $K_4$ (Figure \ref{fig:K4}) or to $K_2^3$ (Figure \ref{fig:K23}), or
    
    \item [(2)] $G$ is a ring of diamonds or 2-cycles, or
    
    \item [(3)] there is a connected bridgeless cubic graph $H$, such that $G$ can be obtained from $H$ by replacing some edges of $H$ with strings of diamonds or 2-cycles (Figure \ref{fig:StringDiamonds2Cycles}), and by replacing any vertex of $H$ with a triangle.
\end{enumerate}
\end{proposition}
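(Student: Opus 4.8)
The plan is to derive this from Oum's characterization (Proposition \ref{prop:OumClawfreebridgelessCharac}) by treating a $2$-cycle as a ``degenerate diamond'': both a diamond and a $2$-cycle are $2$-edge-connected gadgets attached to the rest of the graph through exactly two edges, so one can pass between the multigraph world and Oum's simple world by a local gadget replacement. I would prove the two implications separately, doing the easy converse first and then reducing the forward direction to Proposition \ref{prop:OumClawfreebridgelessCharac}.

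For the converse ($\Leftarrow$), I would check directly that each of the three families produces a connected claw-free bridgeless cubic graph. Cubicity and connectivity are immediate from the constructions, and $K_4$ and $K_2^3$ are checked by hand ($K_2^3$ is bridgeless since deleting any one of its three edges leaves the two parallel edges). Bridgelessness in cases (2) and (3) follows because $H$ is bridgeless and each inserted gadget (triangle, diamond, $2$-cycle) is internally $2$-edge-connected with two attachment edges, so no new bridge is created. The only genuine point is claw-freeness, which is local: at a vertex inside a triangle, diamond, or $2$-cycle, and at every attachment vertex, one verifies that among the three neighbors there is always an adjacent pair, which forbids an induced $K_{1,3}$.

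For the forward direction ($\Rightarrow$), let $G$ be connected, claw-free, bridgeless and cubic. First I would establish a structural lemma on parallel edges. If some pair of vertices is joined by three edges then $G=K_2^3$ and we are in case (1); otherwise every parallel class has multiplicity exactly two, and since a vertex of degree three lies in at most one such $2$-cycle these $2$-cycles are vertex-disjoint. Bridgelessness also rules out the degenerate situation in which both attachment edges of a $2$-cycle go to a common vertex $w$, for then the third edge at $w$ would be a bridge. I would then form a simple graph $G'$ by replacing each $2$-cycle of $G$ by a diamond, the two degree-two vertices of the diamond inheriting the two attachment edges. The replacement is local and preserves cubicity, connectivity and bridgelessness; it yields a simple graph because the only multi-edges of $G$ were these $2$-cycles; and it preserves claw-freeness, since at an attachment vertex the replaced neighbor is, just like before, non-adjacent to the other two neighbors, so no new claw appears. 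Applying Proposition \ref{prop:OumClawfreebridgelessCharac} to $G'$ then places $G'$ among $K_4$, rings of diamonds, or the $H$-construction, and reversing the replacement (turning each diamond that originated from a $2$-cycle back into a $2$-cycle) carries these into cases (1), (2) and (3) of the present statement, with ``diamonds'' upgraded to ``diamonds or $2$-cycles''.

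The main obstacle is the faithful translation at the last step, i.e. checking that the reverse replacement respects Oum's notions of ring and of maximal string. Because diamonds and $2$-cycles have the same two attachment points, the gadget-to-gadget adjacency that defines a string (and the cyclic adjacency that defines a ring) is preserved in both directions, so a maximal run of consecutive diamonds in $G'$ corresponds to a maximal run of consecutive diamonds-or-$2$-cycles in $G$; but this requires care to ensure that no $2$-cycle silently merges or splits a string, and that the small degenerate rings (for example a ring of only two gadgets) are handled. A secondary point to verify is that if $G'\cong K_4$ then $G$ had no $2$-cycle at all, since $K_4$ contains no induced diamond, so that this case does not spuriously produce a $2$-cycle.
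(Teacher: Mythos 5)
The paper does not actually prove Proposition \ref{prop:AnushVahanClawfreebridgelessCharac}: it is quoted from \cite{HakobyanAUJC} and used as a black box, so there is no in-paper argument to measure your proposal against. Judged on its own, your reduction to Oum's theorem (Proposition \ref{prop:OumClawfreebridgelessCharac}) via the ``$2$-cycle $\leftrightarrow$ diamond'' gadget swap is sound and is the natural route. The preliminary observations are all correct: a triple edge forces $G\cong K_2^3$; distinct $2$-cycles are vertex-disjoint since the graph is cubic; bridgelessness forbids the two attachment edges of a $2$-cycle from meeting at a common vertex $w$ (the third edge at $w$ would be a bridge); and this last fact is exactly what makes claw-freeness of $G'$ at an attachment vertex go through, because the $2$-cycle neighbour of $w$ is adjacent to neither of the other two neighbours of $w$, so those two must already be adjacent in $G$ and remain so after the swap.

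The one step you flag but do not close is the faithfulness of the back-translation, and it does need an argument: you must know that each inserted diamond of $G'$ is literally one of the diamonds occurring in the strings (or the ring) of whatever construction Oum's theorem hands you, rather than straddling a vertex-triangle. This follows from two facts: diamonds in a claw-free cubic graph are pairwise vertex-disjoint (stated in the paper just before Proposition \ref{prop:OumClawfreebridgelessCharac}), and no vertex of a vertex-triangle in Oum's construction lies in any diamond, since each vertex of such a triangle, and likewise each head or tail of a string, has exactly one edge leaving its own gadget, which rules out the extra outside adjacencies a diamond through it would require. Hence every induced diamond of $G'$, in particular every inserted one, coincides with a string- or ring-diamond, and replacing it back by a $2$-cycle preserves the gadget-to-gadget adjacency that defines strings and rings; maximality is unaffected because the swap is a bijection on gadgets with the same two attachment points. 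Your remaining check, that $G'\cong K_4$ forces $G$ to have had no $2$-cycle, is immediate from a vertex count. With these details supplied your proof is complete.
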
 

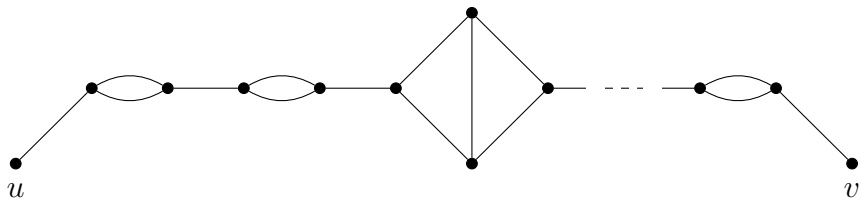
\begin{figure}[ht]
  
  \begin{center}

		\begin{tikzpicture}

                \node at (-1, -1.35) (u) {$u$};
                \node at (10, -1.35) (u) {$v$};
                \draw[dashed] (6.75,0)--(7.25,0);

                 \tikzstyle{every node}=[circle, draw, fill=black!50,
                        inner sep=0pt, minimum width=4pt] 
                        
			\node[circle,fill=black,draw] at (0,0) (n00) {};
			
			
			
			

			\node[circle,fill=black,draw] at (-1,-1) (nm1m1) {};

                \node[circle,fill=black,draw] at (1,0) (n10) {};
                \node[circle,fill=black,draw] at (2,0) (n20) {};
                \node[circle,fill=black,draw] at (3,0) (n30) {};

                \node[circle,fill=black,draw] at (4,0) (n40) {};

                \node[circle,fill=black,draw] at (6,0) (n60) {};
                \node[circle,fill=black,draw] at (5,1) (n51) {};
                \node[circle,fill=black,draw] at (5,-1) (n5m1) {};

                \node[circle,fill=black,draw] at (8,0) (n80) {};
                \node[circle,fill=black,draw] at (9,0) (n90) {};

                 \node[circle,fill=black,draw] at (10,-1) (n10m1) {};

			\path[every node]
			(n00) edge  (nm1m1)
                
                (n00) edge [bend left] (n10)
                edge [bend right] (n10)

                (n10) edge (n20)

                (n20) edge [bend left] (n30)
                edge [bend right] (n30)

                (n30) edge (n40)

                (n40) edge (n51)
                (n40) edge (n5m1)
                (n60) edge (n51)
                (n60) edge (n5m1)
                (n51) edge (n5m1)

                (n60) edge (6.5,0)
                (7.5,0) edge (n80)

                (n80) edge [bend left] (n90)
                edge [bend right] (n90)

                (n90) edge (n10m1)

			;
		\end{tikzpicture}
																
	\end{center}
								
	\caption{Replacing an edge $uv$ with a string of diamonds or 2-cycles.}
	\label{fig:StringDiamonds2Cycles}

\end{figure}

It is known that:
\begin{theorem}
    \label{thm:Sch} (\cite{Lovasz,Sch}) Every edge of a bridgeless cubic graph $G$ belongs to some perfect matching of $G$.
\end{theorem}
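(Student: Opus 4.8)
The plan is to reduce the statement to a single application of Tutte's $1$-factor theorem and then to recover the needed inequality by a counting argument in the spirit of Petersen's theorem. Fix an arbitrary edge $e=uv$ of $G$. It suffices to produce a perfect matching of the graph $H=G-u-v$ obtained by deleting both endpoints of $e$, since adjoining $e$ to such a matching yields a perfect matching of $G$ containing $e$. I would first record the necessary parity precondition: a cubic graph has an even number of vertices, so $|V(G)|$ is even and hence $|V(H)|=|V(G)|-2$ is even as well.

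Next I would verify Tutte's condition for $H$, namely that for every $S\subseteq V(H)$ the number $o(H-S)$ of odd components of $H-S$ is at most $|S|$. Setting $S'=S\cup\{u,v\}$, one has $H-S=G-S'$ and $|S'|=|S|+2$. Let $C_1,\dots,C_k$ be the odd components of $G-S'$. The key local observation is that each $C_i$ sends an odd number of edges into $S'$: since $G$ is cubic, the number of edges leaving $C_i$ has the same parity as $3|C_i|=|C_i|$, which is odd because $|C_i|$ is odd. As $G$ is bridgeless this number cannot equal $1$, hence it is at least $3$, and summing over all components gives at least $3k$ edges from the $C_i$ into $S'$.

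I would then bound from above the number $d$ of edges leaving $S'$. Summing degrees over $S'$ accounts for $3|S'|=3(|S|+2)$ edge-endpoints, of which at least two are consumed by the internal edge $e=uv$; therefore $d\le 3|S'|-2=3|S|+4$. Combining $3k\le d\le 3|S|+4$ yields $k\le|S|+1$.

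The main obstacle is precisely this off-by-one gap: the crude count delivers $k\le|S|+1$ rather than the required $k\le|S|$. I would close it using the parity built into Tutte's framework: since $|V(H)|$ is even, $o(H-S)\equiv|V(H)|-|S|\equiv|S|\pmod 2$, so $k$ and $|S|$ share the same parity, which rules out $k=|S|+1$ and forces $k\le|S|$. Thus Tutte's condition holds, $H$ admits a perfect matching, and $e$ lies in a perfect matching of $G$, as claimed.
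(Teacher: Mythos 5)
Your proof is correct: the reduction to Tutte's condition for $G-u-v$, the parity argument showing each odd component sends an odd (hence, by bridgelessness, at least $3$) number of edges into $S'=S\cup\{u,v\}$, the count $3k\le 3|S'|-2$, and the final parity step closing the gap from $k\le|S|+1$ to $k\le|S|$ are all sound. Note, however, that the paper does not prove this statement at all --- it is quoted as a classical theorem of Sch\"onberger with references to \cite{Lovasz,Sch} --- so there is no internal proof to compare against; what you have written is the standard textbook derivation from Tutte's $1$-factor theorem and would serve as a complete, self-contained justification of the cited result.
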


Using the theory of fractional perfect matchings and Edmonds' matching polytope theorem, it is not hard to prove: 

\begin{theorem}
    \label{thm:PrescribedEdge3cuts} (\cite{KKN2005,KSSidma2008,Zhang1997}) For every edge $e$ of a bridgeless cubic graph $G$ there is a perfect matching $F$ of $G$ such that $e\in F$ and $F$ intersects all 3-edge-cuts in a single edge.
\end{theorem}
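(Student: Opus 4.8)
The plan is to realize the desired matching as one of the terms in a convex decomposition of the \emph{uniform fractional perfect matching}, and to read off both required properties from the geometry of that decomposition. Let $w\in\mathbb{R}^{E(G)}$ be the vector assigning weight $\tfrac13$ to every edge, and for a vertex set $S$ let $\partial(S)$ denote the set of edges with exactly one end in $S$. The first step is to verify that $w$ lies in the perfect matching polytope of $G$, i.e. that it satisfies the hypotheses of Edmonds' perfect matching polytope theorem: $w\geq 0$, $w(\partial(v))=1$ for every vertex $v$, and $w(\partial(S))\geq 1$ for every odd set $S$.

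The degree constraints hold since $G$ is cubic: $w(\partial(v))=3\cdot\tfrac13=1$. For the odd-cut constraints I would use that in a cubic graph the parity of $|\partial(S)|$ equals the parity of $|S|$, which follows from $3|S|=2\,|E(S)|+|\partial(S)|$ (here $E(S)$ is the set of edges inside $S$). Hence every odd set has an odd cut, and since $G$ is bridgeless this cut has size at least $3$, so $w(\partial(S))=\tfrac13\,|\partial(S)|\geq 1$. Thus $w$ belongs to the polytope, and by Edmonds' theorem I can write $w=\sum_i \lambda_i\,\chi_{F_i}$ as a convex combination of incidence vectors of perfect matchings $F_i$, with every $\lambda_i>0$.

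The key observation is that every $3$-edge-cut constraint is \emph{tight} at $w$: if $C=\partial(S)$ is a $3$-edge-cut then $S$ is odd (its cut is odd of size three) and $w(\partial(S))=3\cdot\tfrac13=1$. Since each $\chi_{F_i}(\partial(S))\geq 1$ while $\sum_i\lambda_i\chi_{F_i}(\partial(S))=1$ with $\sum_i\lambda_i=1$ and all $\lambda_i>0$, every term must satisfy $\chi_{F_i}(\partial(S))=1$; that is, every $F_i$ meets $C$ in exactly one edge. So each matching in the decomposition already intersects all $3$-edge-cuts in a single edge (the trivial cuts $\partial(v)$ being handled automatically, as any perfect matching meets each of them once). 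Finally, since $w_e=\tfrac13>0$ we have $\sum_{i:\,e\in F_i}\lambda_i=\tfrac13>0$, so some index $i$ satisfies $e\in F_i$; taking $F=F_i$ for such an $i$ gives a perfect matching containing $e$ and meeting every $3$-edge-cut in exactly one edge.

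I expect the verifications of polytope membership to be genuinely routine. The one point deserving care is the tightness step, namely the general fact that an inequality active at a point of a polytope is active at every vertex occurring with positive weight in a convex representation of that point; this is the crux that converts the statement about the fractional point $w$ into a statement about the individual matchings, and it relies on the odd-cut inequalities being valid for each $F_i$, which is precisely the validity half of Edmonds' theorem. Beyond this, no real obstacle arises, which matches the remark in the text that the result "is not hard to prove."
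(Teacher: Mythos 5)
Your proof is correct and follows exactly the route the paper indicates for this theorem (the uniform fractional perfect matching $w\equiv\tfrac13$ together with Edmonds' perfect matching polytope theorem); the paper itself does not write the argument out but merely cites it, noting that it follows from ``the theory of fractional perfect matchings and Edmonds' matching polytope theorem.'' All of your steps --- polytope membership via the parity observation $3|S|=2|E(S)|+|\partial(S)|$ and bridgelessness, tightness of the $3$-cut inequalities forcing every matching in the convex decomposition to meet each $3$-edge-cut exactly once, and choosing a term containing $e$ because $w_e>0$ --- are sound.
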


We are ready to prove our main result for claw-free bridgeless cubic graphs.

\begin{theorem}
    \label{thm:clawfree} For every claw-free bridgeless cubic graph $G$ and its edge $e$ $G$ has a perfect matching $F$ such that $e\in F$ and $G/\overline{F}$ admits a non-conflicting no-where zero $Z_2\times Z_2$ flow with respect to $\overline{F}$. 
\end{theorem}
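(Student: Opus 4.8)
The plan is to run everything through Oum's structural characterization (Proposition \ref{prop:AnushVahanClawfreebridgelessCharac}) and to collapse the problem to a single flow question on a contraction of a smaller cubic graph. We may assume $G$ is connected. If $G$ is $3$-edge-colorable — in particular if $G$ is $K_4$, $K_2^3$, or a ring of diamonds or $2$-cycles — then for \emph{any} edge $e$ we take a proper $3$-edge-coloring and let $F$ be the color class containing $e$; the complement $\overline F$ is the union of the other two classes, hence a $2$-factor that is alternately $2$-colored, so all its cycles are even and Observation \ref{obs:2factorEvenCycles} supplies a non-conflicting flow. Thus the entire difficulty sits with non-$3$-edge-colorable $G$, which by Oum's theorem must arise from a (necessarily non-$3$-edge-colorable, i.e.\ snark-like) bridgeless cubic graph $H$ by replacing every vertex with a triangle and some edges with strings of diamonds or $2$-cycles.

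I would first argue that the strings only help. A $2$-cycle contributes an even cycle to $\overline F$, and a diamond can be routed by $\overline F$ either as a closed $4$-cycle or as a path running through it; both are safe, and the second option even affords a parity adjustment. So the essential case is the \emph{pure inflation} $G=I(H)$, which I would isolate by induction on the number of diamonds and $2$-cycles (replacing a string by a single edge keeps $G$ claw-free, since the only adjacencies destroyed are to the degree-two connectors), carrying the prescribed edge along with Theorem \ref{thm:Sch}.

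For $G=I(H)$ the matching is essentially forced. To avoid a monochromatic triangle in $\overline F$ (a conflict, by Remark \ref{rem:traingle}), each inflated triangle must contain exactly one edge of $F$; its third vertex is then matched externally, and the selected external edges form a perfect matching $M$ of $H$. Hence $\overline F$ is the lift of the $2$-factor $H-M$, and $G/\overline F$ is $H/(H-M)$ together with one loop per triangle (the internal matching edges). Now assign the value $\alpha+\beta$ to every internal (loop) edge: every $\overline F$-edge then has an endpoint of value $\alpha+\beta$, so the coloring is automatically non-conflicting, while the loops contribute nothing to the flow balance. Consequently a non-conflicting flow exists precisely when $H/(H-M)$ admits a nowhere-zero $Z_2\times Z_2$-flow — equivalently, when $H$ possesses two even subgraphs whose union contains $M$. (Prescribing $e$ amounts to prescribing either an edge of $H$ in $M$, when $e$ is external, or the external edge opposite to $e$, when $e$ is internal.)

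The heart is therefore the following lemma: \emph{for every bridgeless cubic graph $H$ and every edge, there is a perfect matching $M$ through (the dictated copy of) that edge such that $H/(H-M)$ is nowhere-zero $Z_2\times Z_2$-flowable.} I would prove it starting from a $2$-factor $A=H-M_0$ with $M_0$ chosen through the prescribed edge via Theorem \ref{thm:Sch}/\ref{thm:PrescribedEdge3cuts}: its odd cycles occur in even number, and in the bridgeless contraction $H/A$ they can be paired by a $T$-join, which lifts to an even subgraph $B$ for which $A\cup B$ contains a perfect matching. I expect this lemma — and especially the need to force the prescribed edge into $M$ while keeping the contraction flowable — to be the main obstacle: one must reconcile the freedom required to make $H/(H-M)$ flowable (cleanest when $H$ is first decomposed along its $3$-edge-cuts into cyclically $4$-edge-connected pieces, where the classical fact that $4$-edge-connected graphs are $4$-flowable applies) with the rigidity of prescribing a matching edge.
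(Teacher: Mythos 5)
Your reduction to the pure triangle-inflation $G=I(H)$ rests on a sound key observation: once every inflated triangle carries exactly one $F$-edge, those edges become loops of $G/\overline{F}$, and assigning them $\alpha+\beta$ forces every $\overline{F}$-edge to have an endpoint whose unique $F$-edge is $(\alpha+\beta)$-valued, so non-conflict is automatic and the whole problem collapses to making $H/(H-M)$ nowhere-zero $Z_2\times Z_2$-flowable for a perfect matching $M$ of $H$ through a dictated edge. But that lemma --- which you correctly identify as the heart --- is left genuinely open. The $T$-join sketch does not close it: pairing the odd cycles of $A=H-M_0$ by a $T$-join $J\subseteq M_0$ produces one subset of $M_0$ with prescribed parities on the cycles of $A$, whereas flowability of $H/(H-M)$ requires writing \emph{all} of $M$ as a union of two subsets each meeting every cycle of $H-M$ an \emph{even} number of times; you never connect the two, and ``lifts to an even subgraph $B$ for which $A\cup B$ contains a perfect matching'' changes the matching and loses track of the prescribed edge. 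Likewise, decomposing along 3-edge-cuts to reach cyclically 4-edge-connected pieces addresses flows on $H$, not on the contraction $H/(H-M)$, which is the object that must be flowable.

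The fix is already among the tools you cite and is exactly the paper's engine: choose the matching by Theorem \ref{thm:PrescribedEdge3cuts} so that it contains the prescribed edge and meets every 3-edge-cut in a single edge; then the contraction by the complementary 2-factor is bridgeless with no 3-edge-cut, and Jaeger's theorem \cite{Jaeger1979} supplies the nowhere-zero $Z_2\times Z_2$-flow directly. The paper in fact applies this to $G$ itself rather than to $H$: the 3-edge-cut condition already forces exactly one $F$-edge per triangle, a flow on $G/\overline{F}$ minimizing the number of conflicts is taken, and any remaining conflict is destroyed by observing (via claw-freeness) that one of the two offending $F$-edges must be a chord of a cycle of $\overline{F}$, hence a loop of $G/\overline{F}$, whose value can be switched to $\alpha+\beta$ without affecting the flow. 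That route bypasses Oum's characterization and your induction over strings of diamonds and 2-cycles entirely --- an induction which, as written, also leaves unaddressed the case where the prescribed edge $e$ lies inside a diamond or a 2-cycle.
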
 

\begin{proof} Take a perfect matching $F$ of $G$, such that $e\in F$ and $F$ intersects all 3-edge-cuts of $G$ in a single edge (see Theorem \ref{thm:PrescribedEdge3cuts}). Note that $F$ intersects all triangles of $G$ in a single edge. Moreover, note that $G/\overline{F}$ is bridgeless and contains no 3-edge-cuts. Hence, by \cite{Jaeger1979}, $G/\overline{F}$ admits a no-where zero $Z_2\times Z_2$ flow. Consider all no-where zero $Z_2\times Z_2$ flows of $G/\overline{F}$ and among them choose the one with smallest number of conflicts. Let us show that this number is zero.

Suppose there is a conflict with edges $f\in F$ and $f'\in F$. Then at least one of $f$ and $f'$ belongs to a new triangle of $G$, that is, this triangle was replacing a vertex of $H$. Thus, at least one of $f$ and $f'$ is a chord in the corresponding 2-factor $\overline{F}$. Hence, it is a loop in $G/\overline{F}$. Thus, by changing the flow value there to $\alpha+\beta$, we will still obtain a no-where zero $Z_2\times Z_2$ flow with less conflicts. This contradicts our choice. Thus, $F$ contains the edge $e$ and $G/\overline{F}$ admits a no-where zero $Z_2\times Z_2$ flow with respect to $F$ as needed. The proof is complete.
\end{proof}

Combined with Lemma \ref{lem:NonConflictFlowNormal6coloring} we get the following result from \cite{MM2020DAM}:
\begin{corollary}
    All claw-free bridgeless cubic graphs admit a normal 6-edge-coloring.
\end{corollary}

In \cite{MM2020DAM}, it is shown that every cubic permutation graph admits a normal 6-edge-coloring. Our next result strengthens this statement by showing that all bridgeless cubic graph containing a 2-factor having at most two cycles admit such an edge-coloring.

\begin{theorem}
    \label{thm:TwoCycles2factor} Let $G$ be a bridgeless cubic graph containing a 2-factor having at most two cycles. Then $G/\overline{F}$ admits a non-conflicting no-where zero $Z_2\times Z_2$ flow with respect to some 2-factor $\overline{F}$ unless $G$ is the Petersen graph.
\end{theorem}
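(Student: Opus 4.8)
The plan is to reduce to the case of two odd cycles and then run a colouring argument on the crossing edges. First I would dispose of the easy situations. If $G$ is $3$-edge-colorable, Observation~\ref{obs:3edgecolorablecubic} already produces a suitable $2$-factor, so assume $G$ is not $3$-edge-colorable. A cubic graph is $3$-edge-colorable if and only if it has a $2$-factor consisting of even cycles, so the hypothesized $2$-factor is not all-even; since a single-cycle $2$-factor is a Hamiltonian cycle of even length (cubic graphs have even order) and a $2$-factor of two even cycles is all-even, the hypothesized $2$-factor must consist of two cycles $C_1,C_2$ that are not both even. As $\ell_1+\ell_2=|V(G)|$ is even, both $C_1,C_2$ are odd. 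Fix $\overline F=C_1\cup C_2$ with complementary matching $F$, call an $F$-edge \emph{crossing} if it joins $C_1$ to $C_2$ and a \emph{chord} otherwise, and let $k$ be the number of crossing edges. Counting endpoints on $C_1$ gives $k\equiv\ell_1\pmod 2$, so $k$ is odd, and $k\ge 3$ since $G$ is bridgeless. I would set every chord (a loop of $G/\overline F$) to $\gamma:=\alpha+\beta$; loops satisfy conservation automatically and a $\gamma$-vertex never conflicts. It remains to colour the $k$ crossing edges so that their values sum to $0$; because $k$ is odd, the numbers of $\alpha$-, $\beta$-, $\gamma$-edges are all odd, so at least one crossing edge of each colour is forced and the all-$\gamma$ flow is unavailable.

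The engine of the proof is the identity $\alpha+\beta+\gamma=0$. Call two crossing edges a \emph{good pair} if their endpoints are non-adjacent on $C_1$ and their endpoints are non-adjacent on $C_2$. Given a good pair, I would colour one of them $\alpha$, the other $\beta$, and every remaining crossing edge $\gamma$. Then the crossing values sum to $\alpha+\beta+(k-2)\gamma=\alpha+\beta+\gamma=0$ (using that $k$ is odd), so $\theta$ is a nowhere-zero $Z_2\times Z_2$-flow of $G/\overline F$, and the only $\alpha$- and $\beta$-coloured vertices are the four endpoints of the good pair, non-adjacent on their cycles by construction. Hence $\theta$ is non-conflicting, and the whole problem reduces to locating a good pair.

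To analyse when a good pair exists, I would encode the obstruction by two auxiliary graphs $G_1,G_2$ on the $k$ crossing edges, joining two of them in $G_i$ exactly when their endpoints are adjacent on $C_i$. A good pair is precisely a common non-edge, so good pairs fail exactly when $G_1\cup G_2=K_k$. Each crossing endpoint has only two cycle-neighbours, so $G_i$ has maximum degree $2$ and $|E(G_i)|\le k$; therefore $\binom{k}{2}\le|E(G_1)|+|E(G_2)|\le 2k$, forcing $k\le 5$, i.e.\ $k\in\{3,5\}$. For $k=5$ equality holds throughout: each $G_i$ is $2$-regular, which forces $\ell_i=5$ with no chords and $G_i=C_i$, and $G_1,G_2$ are edge-disjoint spanning $5$-cycles of $K_5$. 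Since the decomposition of $K_5$ into two $5$-cycles is unique up to isomorphism, together with the crossing matching this yields exactly $P_{10}$, the excluded graph; and as $G$ is not $3$-edge-colorable it has at least ten vertices, so no smaller configuration interferes in this count.

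The remaining and genuinely delicate case is $k=3$, where the three crossing edges form a nontrivial $3$-edge-cut, the colouring $(\alpha,\beta,\gamma)$ is forced, and chords cannot help, so a good pair may be absent without $G$ being $P_{10}$ (for instance when a cycle carries a triangle, which is already bad by Remark~\ref{rem:traingle}). Here I would not keep $\overline F$ fixed but perform $2$-factor surgery across the cut: flipping $F$ along a suitable $F$-alternating path or cycle through the transversal edges yields another two-odd-cycle or all-even $2$-factor in which the three transversal endpoints are spread apart, restoring a good pair (or reducing to Observation~\ref{obs:2factorEvenCycles}/\ref{obs:3edgecolorablecubic}). I expect this $k=3$ step — showing that in a bridgeless, non-$3$-edge-colorable graph other than $P_{10}$ one can always reroute the $2$-factor off a forced conflict across a $3$-edge-cut — to be the main obstacle, whereas the rigid $k=5$ computation is exactly what isolates the Petersen exception and all larger $k$ are handled uniformly by the good-pair engine.
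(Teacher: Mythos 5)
Your setup and your treatment of $k\geq 5$ are sound and essentially coincide with Case~1 of the paper's proof: the paper uses the same good-pair device (a pair $u_i,u_j$ non-adjacent on $C_1$ with $v_i,v_j$ non-adjacent on $C_2$, coloured $\alpha,\beta$ against a background of $\alpha+\beta$) and the same kind of count, in the form $\binom{n}{2}-n_1\geq\binom{n}{2}-n\geq n\geq n_2$ for $n\geq 5$; your identification of the rigid $k=5$ configuration directly as $P_{10}$ via the uniqueness of the decomposition of $K_5$ into two Hamiltonian cycles is a slightly cleaner way to close that subcase than the paper's appeal to permutation graphs. The problem is that your argument stops exactly where the real work begins: the case $k=3$ is not proved, only announced. ``Flipping $F$ along a suitable $F$-alternating path or cycle through the transversal edges'' is not an argument --- you do not say which path, why one exists, why the new $2$-factor again has at most two cycles so that the machinery can be reapplied, or why the process terminates. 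Since, by your own analysis, the no-good-pair situation with $k=3$ genuinely occurs for graphs other than $P_{10}$ (for instance when one cycle of the $2$-factor is a triangle, where Remark~\ref{rem:traingle} shows the given $2$-factor is unusable and must be replaced), this is a real gap, not a routine verification.

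For comparison, the paper resolves $k=3$ in two steps, neither of which is a generic alternating-path flip. If each cycle carries at least one adjacent pair of crossing endpoints ($n_1\geq 1$ and $n_2\geq 1$), it shows that both contractions $G/V(C_1)$ and $G/V(C_2)$ are Hamiltonian, hence $3$-edge-colorable, hence $G$ itself is $3$-edge-colorable and Observation~\ref{obs:3edgecolorablecubic} applies. Otherwise $(n_1,n_2)\in\{(3,0),(0,3)\}$, i.e.\ one cycle is a triangle; the paper then deletes the triangle and one further vertex, uses Hamiltonicity of the resulting cubic graph to manufacture a new $2$-factor $\overline{F'}$ of $G$, and either pushes extra flow along a cycle of $G/\overline{F'}$ through the edge $u_2v_2$ or, when the offending odd cycle is attached to the rest of $G$ by a nontrivial $3$-edge-cut, splits $G$ along that cut and invokes an induction hypothesis on $|V(G)|$ for one of the two pieces. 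In particular the paper's entire proof is organized as an induction on the number of vertices, which your proposal never sets up; without that (or a fully worked-out substitute for the surgery you gesture at), the $k=3$ case --- and hence the theorem --- remains unproven.
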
 

\begin{proof} Our proof is by induction on the number of vertices of $G$. Clearly, our statement is true when $|V|=2$. By induction assume that the statement is true for all graphs with less than $|V|$ vertices, and let us consider a bridgeless cubic graph $G$ containing a 2-factor $\overline{F}$ having at most two cycles. We will use some ideas given in \cite{MM2020DAM} for cubic permutation graphs. Note that if the number of cycles in $\overline{F}$ is one or it is two and both of the cycles are even, then our statement follows from Observation \ref{obs:2factorEvenCycles}. Thus, we can focus on the case when $\overline{F}$ has two odd cycles $C_1$ and $C_2$. Note that in great contrast with permutation cubic graphs our cycles $C_1$ and $C_2$ may have chords. Let $n$ be the number of edges of $F$ joining $C_1$ to $C_2$. Moreover, let $u_1,...,u_n$ be these $n$ vertices of $C_1$ that are joined to vertices $v_1,...,v_n$ from $C_2$, respectively. Since $C_1$ and $C_2$ are odd cycles, we have that $n$ is odd.

Let $n_1$ and $n_2$ be the number of pairs from $u_1,...,u_n$ and $v_1,...,v_n$, that are adjacent on $C_1$ and $C_2$, respectively. Note that we count the number of pairs on cycles plus chords. We have $n_1, n_2\leq n$. We consider two cases.

\medskip

Case 1: $n\geq 5$. Hence
\begin{equation}\label{eq:PairsNonPairs}
    \binom{n}{2}-n_1\geq \binom{n}{2}-n\geq n \geq n_2,
\end{equation} as $n\geq 5$. Suppose that $\binom{n}{2}-n_1>n_2$. Then, since the expression in the left side of (\ref{eq:PairsNonPairs}) counts the number of non-pairs in $C_1$, there is a non-edge $u_i, u_j$ on $C_1$ such that $v_i, v_j$ is a non-edge on $C_2$. Define a function $\theta$ as follows: $\theta(u_iv_i)=\alpha$, $\theta(u_{j}v_{j})=\beta$, and on remaining edges of $F$, we set the value of $\theta$ as $\alpha+\beta$. Note that since $C_1$ and $C_2$ are odd cycles, there are exactly one edge of $\theta$-value $\alpha$ and $\beta$, and they do not form a conflict, we have that $\theta$ is a non-conflicting no-where zero $Z_2\times Z_2$ flow with respect to the 2-factor $\overline{F}$.

Thus, it remains to consider the case $\binom{n}{2}-n_1=n_2$. From the chain of inequalities (\ref{eq:PairsNonPairs}) we have that
\[\binom{n}{2}-n_1= \binom{n}{2}-n= n = n_2.\]
This implies that $n=5$ and $C_1$, $C_2$ have no chords. Thus, either $G$ is 3-edge-colorable hence we have the statement via Observation \ref{obs:3edgecolorablecubic} or $G$ is the Petersen graph.

\medskip

Case 2: $n=3$. If $\binom{n}{2}-n_1>n_2$, the same reasoning from Case 1 works. Thus, we can assume $\binom{n}{2}-n_1\leq n_2$, or
\[3=\binom{3}{2}=\binom{n}{2}\leq n_1+n_2.\]
Since $n_1, n_2\leq n=3$, this means that either $n_1\geq 1$ and $n_2\geq 1$ or $(n_1, n_2)\in \{(3,0), (0,3)\}$. 

\medskip

Case 2a: $n_1\geq 1$ and $n_2\geq 1$. Consider the graphs $G/V(C_1)$ and $G/V(C_2)$. Note that since $n_1\geq 1$ the new vertex of $G/V(C_1)$ is adjacent to two consecutive vertices of $C_1$. Similarly, note that since $n_2\geq 1$, the new vertex of $G/V(C_2)$ is adjacent to two consecutive vertices of $C_2$. Note that these observations imply that both of $G/V(C_1)$ and $G/V(C_2)$ are Hamiltonian, as it is not hard to extend $C_1$ to a Hamiltonian cycle of $G/V(C_1)$, and extend $C_2$ to a Hamiltonian cycle of $G/V(C_2)$. Thus, both of these graphs are 3-edge-colorable. Thus, $G$ is 3-edge-colorable. Hence, our statement for this case follows from Observation \ref{obs:3edgecolorablecubic}.

\medskip

Case 2b: $(n_1, n_2)\in \{(3,0), (0,3)\}$. Since the cases are symmetric it suffices to consider the case $n_1=3$ and $n_2=0$. Since $n_1=3$, it means one of the cycles of the 2-factor is a triangle. Let $C_1=T=u_1u_2u_3$ be this triangle. For $i=1,2,3$ let $v_i$ be the unique neighbor of $u_i$ on $C_2$. Consider a cubic graph $H$ obtained from $G$ by removing all vertices of $T$ and $v_2$, and adding the edges $g=v_1v_3$ and $f$ connecting the two neighbors of $v_2$ on $C_2$. Note that this cubic graph is Hamiltonian as $D=C_2-v_2+f$ is a Hamiltonian cycle in it. Consider a perfect matching $F$ of $D$ that does not contain the edge $f$. Note that it does not contain the edge $g$, too. Observe that $D-F$ is 2-edge-colorable, hence all cycles of $D-F$ are even. Consider a 2-factor $\overline{F'}$ of $G$ obtained from $\overline{F}$ by replacing the edge $f$ with the two edges of $C_2$ incident to $v_2$, and replacing $g$ with four edges $v_1u_1$, $u_1u_2$, $u_2u_3$ and $u_3v_3$. Let $C_f$ and $C_g$ be the cycles of $\overline{F'}$ such that $C_f$ contains the edges incident to $v_2$, and $C_g$ be the cycle containing the edge $v_1u_1$. Note that if $C_f=C_g$ or these cycles are different but both of them are of even length, then by defining $\theta(e)=\alpha+\beta$ for every edge $e\in F'$, we will obtain a non-conflicting no-where zero $Z_2\times Z_2$ flow of $G/\overline{F'}$ with respect to the 2-factor $\overline{F'}$. Thus it remains to consider the case that $C_f$ and $C_g$ are different and both of them are of odd length. Note that $C_f$ and $C_g$ are the only odd cycles of $\overline{F'}$. Hence, if we set $\theta(e)=\alpha+\beta$ for every edge $e\in F'$, we will have that the definition of no-where zero $Z_2\times Z_2$ flow of $G/\overline{F'}$ is violated only at the two vertices of $G/\overline{F'}$ corresponding to $C_f$ and $C_g$.

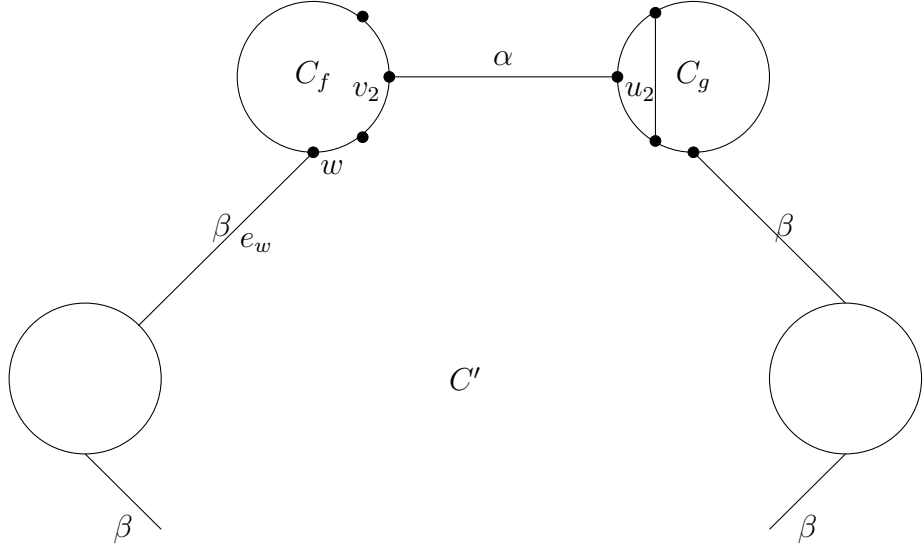
\begin{figure}[ht]
  
  \begin{center}

		\begin{tikzpicture}


                \node at (-0.75, -1.2) {$w$};
                 \node at (-1.75, -2.2) {$e_w$};

                 \node at (-0.3, -0.2) {$v_2$};
                 \node at (3.3, -0.2) {$u_2$};

                \node at (1, -4) {$C'$};
                 \node at (-1, 0) {$C_f$};
                 \node at (4, 0) {$C_g$};

                 \node at (1.5, 0.25) {$\alpha$};
                 \node at (-2.2, -2) {$\beta$};
                 \node at (5.2, -2) {$\beta$};

                 \node at (-3.5, -6) {$\beta$};
                 \node at (5.5, -6) {$\beta$};

                \tikzstyle{every node}=[circle, draw, fill=black!50,inner sep=0pt, minimum width=4pt]

                \draw (-1,0) circle (1cm); 

                \draw (4,0) circle (1cm); 

                \draw (-4,-4) circle (1cm);
                \draw (6,-4) circle (1cm);

                \draw (-1,-1) -- (-3.3,-3.3);
                \draw (4, -1) -- (6, -3);

                \draw (-4, -5) -- (-3, -6);
                \draw (6, -5) -- (5, -6);
																							
			\node[circle,fill=black,draw] at (0,0) (v2) {};

                \node[circle,fill=black,draw] at (3, 0) (u2) {};

                \node[circle,fill=black,draw] at (-1,-1) (w) {};
                 \node[circle,fill=black,draw] at (4,-1)  {};

                \node[circle,fill=black,draw] at (-0.35, 0.8) {};
                \node[circle,fill=black,draw] at (-0.35, -0.8) {};

                \node[circle,fill=black,draw] at (3.5, 0.85) (x) {};
                \node[circle,fill=black,draw] at (3.5, -0.85) (y) {};

			
			
			
			


			\path[every node]
			(v2) edge  (u2)
                (x) edge (y)

			;
		\end{tikzpicture}
																
	\end{center}
								
	\caption{The simple cycle $C'$ of $G/\overline{F'}$ containing the edges $u_2v_2$ and $e_w$.}
	\label{fig:SympleCycleCprime}

\end{figure}

Suppose that $C_f$ has a vertex $w$ not adjacent to $v_2$ such that the edge $e_w$ of $F'$ incident to $w$ connects $w$ to a vertex outside $V(C_f)$ (Figure \ref{fig:SympleCycleCprime}). Since $G$ is bridgeless, we have that $G/\overline{F'}$ is bridgeless, too. Hence $G/\overline{F'}$ contains a simple cycle $C'$ containing $u_2v_2$ and $e_w$ (Figure \ref{fig:SympleCycleCprime}). We modify the above defined function $\theta$ as follows: it is equal to $\theta(e)=\alpha+\beta$ for every edge $e\in F'$ except on edges of $C'$, where we have $\theta(u_2v_2)=\alpha$ and $\theta(e)=\beta$ for all edges of $e\in C'-\{u_2v_2\}$ (Figure \ref{fig:SympleCycleCprime}). Note that $\theta$ is a no-where zero $Z_2\times Z_2$ flow of $G/\overline{F'}$. Let us show that it is non-conflicting. Note that we have only one edge with $\theta$-value $\alpha$, which is $u_2v_2$. Note that the edge $u_1u_3$ is chord of $\overline{F'}$ hence we can take its flow value $\alpha+\beta$, hence we do not violate the definition of the flow. On the other hand, the two edges of $F'$ that are incident to a vertex adjacent to $v_2$ do not lie on $C'$, the flow values of these edges will be $\alpha+\beta$, too. Hence we will not have a conflict.

Thus, we are left with the case when $V(C_f)$ is joined to $V\backslash V(C_f)$ with exactly three edges. One of them is $u_2v_2\in F'$ and the others are $g\in F'$ and $h\in F'$. Moreover, $g$ and $h$ are incident to vertices that are adjacent to $v_2$ on $C_2$. Let $K=\{u_2v_2, g, h\}$ be the non-trivial 3-edge-cut joining $V(C_f)$ to $V\backslash V(C_f)$. Consider the graphs 
\[H_1=G/V(C_f) \text{ and } H_2=G/(V\backslash V(C_f)).\]
Note that since the new vertex of $H_2$ corresponding to $V\backslash V(C_f)$ is joined to two neighboring vertices on $C_f$, we have that $H_2$ has a Hamiltonian cycle, hence it is 3-edge-colorable. On the other hand, note that $H_1$ is a bridgeless cubic graph containing a 2-factor with at most two cycles ($C_1$ and the restriction of $C_2$ to $H_1$). Since $|V(H_1)|<|V|$, $H_1$ contains a triangle hence it is different from $P_{10}$, by induction we have that $H_1$ has a perfect matching $F_0$, such that $H_1/\overline{F_0}$ admits a non-conflicting no-where zero $Z_2\times Z_2$ flow. By Remark \ref{rem:traingle}, $F_0$ intersects $\{u_1v_1, u_2v_2, u_3v_3\}$ in a single edge. Moreover, trivially $F_0$ intersects $K$ in a single edge. Let this edge of $K$ be $t$. Suppose that the value of non-conflicting no-where zero $Z_2\times Z_2$ flow of $H_1/\overline{F_0}$ on $t$ is $x$. Since $H_2$ is 3-edge-colorable, we have that $H_2$ has a perfect matching $J$, such that $t\in J$ and $H_2-J$ is comprised of even cycles. Extend the perfect matching $F_0$ to a perfect matching $J'$ of our original graph $G$ by adding the edges of $J$ to it and taking the flow value on them as $x$. Note that we will get a perfect matching $J'$ of $G$ with respect to which $G/\overline{J}$ admits a non-conflicting no-where zero $Z_2\times Z_2$ flow. The proof is complete.
\end{proof}

Combined with Lemma \ref{lem:NonConflictFlowNormal6coloring} we get the following result from \cite{MM2020DAM}:
\begin{corollary}
    All cubic permutation graphs admit a normal 6-edge-coloring.
\end{corollary}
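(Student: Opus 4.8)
The plan is to obtain this corollary directly from Theorem \ref{thm:TwoCycles2factor} and Lemma \ref{lem:NonConflictFlowNormal6coloring}, the only genuine work being a separate treatment of the Petersen graph. First I would recall the structure of a cubic permutation graph: it arises from two vertex-disjoint cycles $C_1$ and $C_2$ of equal length by adding a perfect matching between $V(C_1)$ and $V(C_2)$ prescribed by a permutation. In particular $C_1\cup C_2$ is a $2$-factor of $G$ consisting of exactly two cycles, so every cubic permutation graph satisfies the hypothesis of Theorem \ref{thm:TwoCycles2factor}; I would also note that such a graph is bridgeless, so that the flow machinery of Lemma \ref{lem:NonConflictFlowNormal6coloring} is applicable.

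With this in hand, the argument splits according to the conclusion of Theorem \ref{thm:TwoCycles2factor}. If $G$ is not the Petersen graph, the theorem produces a $2$-factor $\overline{F}$ with respect to which $G/\overline{F}$ admits a non-conflicting no-where zero $Z_2\times Z_2$ flow. Lemma \ref{lem:NonConflictFlowNormal6coloring} then immediately yields $\chi'_N(G)\leq 6$, which is exactly the assertion.

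The only real obstacle is that the Petersen graph $P_{10}$ is itself a cubic permutation graph, its outer and inner $5$-cycles forming a $2$-factor with two cycles, and it is precisely the graph excluded by Theorem \ref{thm:TwoCycles2factor}; hence it must be dispatched by hand. Here I would use that $P_{10}$ trivially admits an $H$-coloring by itself, namely $P_{10}\prec P_{10}$ via the identity map on $E(P_{10})$. By Proposition \ref{prop:JaegerNormalColor} this is equivalent to $P_{10}$ possessing a normal $5$-edge-coloring, whence $\chi'_N(P_{10})\leq 5\leq 6$. Combining the two cases establishes the corollary for every cubic permutation graph, so the entire difficulty reduces to recognizing that the graph excluded in Theorem \ref{thm:TwoCycles2factor} belongs to the class under consideration and eliminating it directly.
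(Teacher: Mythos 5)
Your proposal is correct and follows essentially the same route the paper intends: apply Theorem \ref{thm:TwoCycles2factor} to the $2$-factor formed by the two permutation cycles and then invoke Lemma \ref{lem:NonConflictFlowNormal6coloring}, with the Petersen graph handled separately via $P_{10}\prec P_{10}$ and Proposition \ref{prop:JaegerNormalColor}. Your explicit treatment of the excluded case $P_{10}$ is a point the paper leaves implicit, and it is handled correctly.
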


In the previous statements we explicitly required that the cubic graph under consideration differs from $P_{10}$. The reader probably guessed that the main reason why we did that is that it does not admit a non-conflicting flow with respect to any 2-factor $\overline{F}$. Note that since $P_{10}$ is triangle-free, presence of a triangle in a 2-factor (see Remark \ref{rem:traingle}) is not the only obstruction for the existence of a non-conflicting no-where zero $Z_2\times Z_2$ flow.
    
\begin{proposition}
\label{prop:PetersenNonConflicFlow} The Petersen graph $P_{10}$ does not admit a non-conflicting no-where zero $Z_2\times Z_2$ flow with respect to any 2-factor $\overline{F}$.
\end{proposition}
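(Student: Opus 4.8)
The plan is to exploit the rigid structure of the $2$-factors of $P_{10}$, combine it with the elementary flow-counting already used in Lemma \ref{lem:5regular5edgeconnected}, and finish with a covering property of the pentagon together with its pentagram. Since $P_{10}$ is non-Hamiltonian and has girth $5$, every $2$-factor $\overline{F}$ must consist of exactly two disjoint $5$-cycles $C_1$ and $C_2$, and the five edges of $F$ form a perfect matching between $C_1$ and $C_2$. Indeed, none of these five edges can be a chord of $C_1$ or of $C_2$: a chord of a $5$-cycle joins two vertices at distance two, and together with the short arc it would create a triangle, contradicting the girth. Hence $G/\overline{F}$ is simply two vertices joined by the five parallel edges of $F$.

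Next I would analyze the flow itself. For a no-where zero $Z_2\times Z_2$ flow $\theta$ on $G/\overline{F}$ the only requirement is that the five values $\theta(e)$, $e\in F$, sum to $0$. Writing $a,b,c$ for the numbers of edges of $F$ carrying $\alpha$, $\beta$, $\alpha+\beta$, vanishing of the sum forces $a\equiv b\equiv c \pmod 2$; since $a+b+c=5$, all three counts are odd, so each of $\alpha$ and $\beta$ occurs on at least one matching edge. This is exactly the parity argument used in the proof of Lemma \ref{lem:5regular5edgeconnected}.

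The crucial combinatorial step comes third. Label the matching edge incident to $u_i\in C_1$ (hence to its partner $v_i\in C_2$) by $\ell_i=\theta(u_iv_i)$, so that both $u_i$ and $v_i$ are incident to an edge of value $\ell_i$. I would then prove that for any two distinct indices $i,j$ either $u_iu_j\in E(C_1)$ or $v_iv_j\in E(C_2)$. The two families of index-pairs ``adjacent on $C_1$'' and ``adjacent on $C_2$'' are disjoint, because $u_iu_j\in E(C_1)$ and $v_iv_j\in E(C_2)$ occurring simultaneously would close a $4$-cycle $u_iu_jv_jv_i$, impossible in $P_{10}$. Each family has exactly five pairs, so together they exhaust all $\binom{5}{2}=10$ pairs of indices; in other words, the pentagon $C_1$ and the pentagram $C_2$ jointly realize $K_5$ on the index set. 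I expect this covering claim to be the one point requiring genuine care; note that the girth-$5$ argument pins down the interaction of $C_1$, $C_2$ and $F$ directly, so no appeal to transitivity of $\mathrm{Aut}(P_{10})$ on $2$-factors is needed.

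Finally I would combine the pieces. If $\ell_i=\alpha$ and $\ell_j=\beta$ for some $i\neq j$, then the covering edge supplied by the previous step lies in $\overline{F}$ and has one endpoint incident to an $\alpha$-valued edge and the other incident to a $\beta$-valued edge, i.e.\ a conflict. But the flow count guarantees that such indices $i,j$ exist. Therefore every no-where zero $Z_2\times Z_2$ flow of $G/\overline{F}$ contains a conflict, and this holds for every $2$-factor $\overline{F}$, which is precisely the claim.
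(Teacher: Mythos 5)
Your proof is correct and follows essentially the same route as the paper: the parity of the $5$-edge-cut $F$ forces both $\alpha$ and $\beta$ to appear, and the fact that any two matching edges are joined by an edge of $\overline{F}$ then yields a conflict. The only difference is that you supply an explicit girth-and-counting justification of that covering property, which the paper simply asserts.
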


\begin{proof} The complementary 1-factor $F$ is a 5-edge-cut that separates the two 5-cycles of $\overline{F}$. Since it is an odd edge-cut, in any no-where zero $Z_2\times Z_2$ flow of $G/\overline{F}$, an odd number of $\alpha$, $\beta$ and $\alpha+\beta$ edges must appear on it. Assume that on one of them the value $\alpha$ appears. Then this edge is joined to the other four edges of $F$ with an edge of from $\overline{F}$. Thus, we cannot put the value $\beta$ on either of them. The proof is complete.
\end{proof}

The Petersen graph $P_{10}$ is exceptional in many cases. In other words, there are statements where the only counter-example to them is $P_{10}$ (Theorem \ref{thm:TwoCycles2factor}, or the main result of \cite{Lov1987} are typical cases of this phenomenon). So one may wonder whether $P_{10}$ is the only 2-edge-connected cubic graph which does not admit a non-conflicting no-where zero $Z_2\times Z_2$ flow with respect to any 2-factor $\overline{F}$. Unfortunately, this statement fails as our next result shows.

\begin{theorem}\label{thm:2edgeconnectedexamples} There exist infinitely many 2-edge-connected cubic graphs $G$ that do not admit a non-conflicting no-where zero $Z_2\times Z_2$ flow with respect to $\overline{F}$ for any perfect matching $F$ of $G$. 
\end{theorem}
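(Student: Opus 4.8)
The plan is to build the infinite family from the Petersen graph, which by Proposition \ref{prop:PetersenNonConflicFlow} already fails to admit a non-conflicting flow, and to show that this obstruction can be forced to propagate through a gluing operation. The natural construction is to take $k$ copies of $P_{10}$ and connect them via small edge-cuts so that any perfect matching $F$ of the resulting graph $G$ is forced, on at least one copy of $P_{10}$, to induce the same 5-cycle structure that caused the failure in Proposition \ref{prop:PetersenNonConflicFlow}. A concrete way to do this is to delete a suitable edge (or a vertex expanded into new degree-two vertices) from each copy and string the copies together in a cycle using a 2-edge-connecting construction, so that the global graph remains 2-edge-connected but not 3-edge-connected. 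Since each $P_{10}$-gadget is attached through a 2-edge-cut, the two connecting edges of each cut are forced into or out of $F$ in a rigid way, and inside each copy the complementary 2-factor $\overline{F}$ restricted to that copy behaves essentially like a 2-factor of $P_{10}$ itself.

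First I would fix the gadget precisely. I would take $P_{10}$, choose an edge, subdivide or remove it to create two degree-two vertices, and call the resulting graph a \emph{Petersen link}; chaining $k$ such links around a cycle (identifying the free ends appropriately, or joining consecutive links by a pair of parallel connecting edges forming 2-edge-cuts) yields a 2-edge-connected cubic graph $G_k$ on $10k$ vertices, giving infinitely many distinct graphs as $k$ grows. Second, I would analyze an arbitrary perfect matching $F$ of $G_k$: because each link is attached by a 2-edge-cut $\{e_1,e_2\}$, a parity argument shows that $F$ either contains both $e_1,e_2$ or neither, and in either case the restriction of $F$ to the interior of a link completes to a perfect matching of a copy of $P_{10}$ (possibly with the distinguished edge forced in or out). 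Third, I would show that for at least one of the $k$ links the induced 2-factor $\overline{F}$ restricted to that link consists of two 5-cycles meeting the connecting cut the way the complementary 1-factor does in $P_{10}$, so that the local argument of Proposition \ref{prop:PetersenNonConflicFlow} applies verbatim: an odd edge-cut inside that link forces an $\alpha$-edge, and every other edge of that cut is joined to it by an edge of $\overline{F}$, forbidding any $\beta$-value and producing no valid non-conflicting flow.

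The key point to make rigorous is that the gluing does not create an \emph{escape route}: one must rule out the possibility that a non-conflicting flow on $G_k$ distributes its $\alpha$- and $\beta$-edges across different links so cleverly that no single copy ever sees the forbidden configuration. To close this, I would argue by a counting or parity argument on each odd cut: every Petersen link still presents an odd edge-cut (inherited from the 5-edge-cut separating the two $5$-cycles of $P_{10}$), and along such a cut a no-where zero $Z_2\times Z_2$ flow must carry an odd number of edges in each of the three nonzero classes, in particular at least one $\alpha$ and the surrounding $\overline{F}$-edges then block $\beta$ exactly as before. Since this holds inside every link independently, no global reshuffling helps, and $G_k$ admits no non-conflicting flow with respect to any $\overline{F}$.

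The hard part will be designing the gadget and the gluing so that both required properties hold simultaneously: (i) $G_k$ is genuinely 2-edge-connected (no bridges appear at the junctions), and (ii) the internal odd-cut-plus-$\overline{F}$-neighborhood structure of $P_{10}$ survives intact for every perfect matching $F$, so that the local impossibility of Proposition \ref{prop:PetersenNonConflicFlow} cannot be circumvented. Establishing (ii) uniformly over \emph{all} perfect matchings $F$, rather than for one convenient matching, is the crux; I expect the decisive lemma to be a statement that any perfect matching of the link restricts to a perfect matching of $P_{10}$ whose complement is forced to keep the two $5$-cycles (equivalently, that $P_{10}$'s essentially unique 2-factor structure up to symmetry is inherited), after which the conflict argument is immediate.
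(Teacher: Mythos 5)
Your overall strategy---propagating the obstruction of Proposition \ref{prop:PetersenNonConflicFlow} through copies of $P_{10}-e$ attached along $2$-edge-cuts---is the same as the paper's, but your construction omits the one ingredient that makes it work, and the ``escape route'' you flag in your third paragraph is a genuine counterexample to your version rather than a technicality to be checked. If you chain $k$ copies of $P_{10}-e$ into a cycle with single connecting edges, the parity argument indeed gives that each copy has $0$ or $2$ of its two connecting edges in $F$; but since the copies form a cycle this propagates, so either \emph{all} connecting edges lie in $F$ or \emph{none} do, and nothing rules out the second case (take $F$ to be a union of perfect matchings of the individual copies of $P_{10}-e$, which exist since every edge of $P_{10}$ is avoided by some perfect matching). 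In that case $\overline{F}$ restricted to a copy is \emph{not} two $5$-cycles: it is one $5$-cycle plus a path of length $4$ between the two degree-two vertices, and these paths concatenate with the connecting edges into one long cycle through all copies. So the ``decisive lemma'' you hope for at the end is false. Worse, this configuration really does admit a non-conflicting flow: realizing each copy as $P_{10}$ minus the outer edge $12$ with $F$ the five spokes, the flow around each inner $5$-cycle must use one $\alpha$-spoke, one $\beta$-spoke and three $(\alpha+\beta)$-spokes, and the unique pair of spoke-feet that is non-adjacent both on the inner $5$-cycle and on the length-$4$ path is precisely the pair of path-endpoints $\{1,2\}$; placing $\alpha$ and $\beta$ there and alternating their roles from one copy to the next yields a conflict-free nowhere-zero $Z_2\times Z_2$ flow whenever $k$ is even. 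Hence your $G_k$ does not satisfy the theorem.

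The paper closes exactly this gap with a forcing gadget: the $3\ell$ copies of $P_{10}-e$ are joined cyclically by \emph{paths of length two}, and $\ell$ additional vertices $u_1,\dots,u_\ell$ are each joined to three of the central vertices of these paths. Since $u_1$ is matched by $F$ to only one of its three neighbours, some central vertex $w_1$ with $u_1w_1\notin F$ must be matched into an adjacent copy $H_j$; by your parity argument both edges leaving $H_j$ then lie in $F$, and flow conservation across this $2$-edge-cut forces them to carry the same value, so they behave like a single virtual edge. Only then does $\overline{F}$ restrict to a genuine $2$-factor of $H_j$ (two $5$-cycles separated by the $5$-cut consisting of the four internal $F$-edges plus the virtual edge), and the local argument of Proposition \ref{prop:PetersenNonConflicFlow} applies verbatim inside $H_j$. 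To salvage your write-up you need some such mechanism guaranteeing, for \emph{every} perfect matching $F$, at least one copy whose connecting edges both lie in $F$; the plain cyclic chain does not provide it.
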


\begin{proof} For $\ell\geq 1$ take $3\ell$ vertex-disjoint copies of $P_{10}-e$. Let $H_1,...,H_{3\ell}$ be these graphs. Join $H_1,...,H_{3\ell}$ cyclically by paths of length two. Now introduce $\ell$ new vertices $u_1,...,u_{\ell}$ and join each of them to exactly three central vertices of these paths of length two so that the resulting graph $G$ is cubic (see Figure \ref{fig:Large2edgeconnectedExamples}). Note that $G$ is 2-edge-connected.

\begin{figure}[ht]
  
  \begin{center}

		\begin{tikzpicture}







                 \node at (1.5,-2.4)  {$u_1$};
                 
                 \node at (-1.1, 0.5) {$P_{10}-e$}; 

                 \node at (4.1, 0.5) {$P_{10}-e$}; 

                 \node at (-4, -4) {$P_{10}-e$}; 

                 \node at (6, -4) {$P_{10}-e$}; 

                \tikzstyle{every node}=[circle, draw, fill=black!50,inner sep=0pt, minimum width=4pt]

                \draw (-1,0) circle (1cm); 

                \draw (4,0) circle (1cm); 

                \draw (-4,-4) circle (1cm);
                \draw (6,-4) circle (1cm);



                \draw (-4,-4.5) -- (-3, -5.5);

                \draw (6.5,-4.5) --(5.5,-5.5);
																							
			\node[circle,fill=black,draw] at (-0.5,0) (a1) {};
                \node[circle,fill=black,draw] at (-1,-0.5) (a2) {};

                \node[circle,fill=black,draw] at (3.5,0) (b1) {};
                \node[circle,fill=black,draw] at (4,-0.5) (b2) {};

                \node[circle,fill=black,draw] at (1.5,0) (ab) {};

                \node[circle,fill=black,draw] at (-3.5,-3.5) (c1) {};
                \node[circle,fill=black,draw] at (-4,-4.5) (c2) {};

                \node[circle,fill=black,draw] at (-2.15, -2) (ac) {};

                \node[circle,fill=black,draw] at (6,-3.5) (d1) {};
                \node[circle,fill=black,draw] at (6.5,-4.5) (d2) {};

                 \node[circle,fill=black,draw] at (5.15, -2) (bd) {};

               \node[circle,fill=black,draw] at (1.5,-2) (u) {};

			\path[every node]
                (ab) edge (a1)
                (ab) edge (b1)

                (ac) edge (a2)
                (ac) edge (c1)

                (bd) edge (b2)
                (bd) edge (d1)

                (u) edge (ab)
                (u) edge (ac)
                (u) edge (bd)

			;
		\end{tikzpicture}
																
	\end{center}
								
	\caption{Examples of arbitrary large 2-edge-connected cubic graphs.}
	\label{fig:Large2edgeconnectedExamples}

\end{figure}
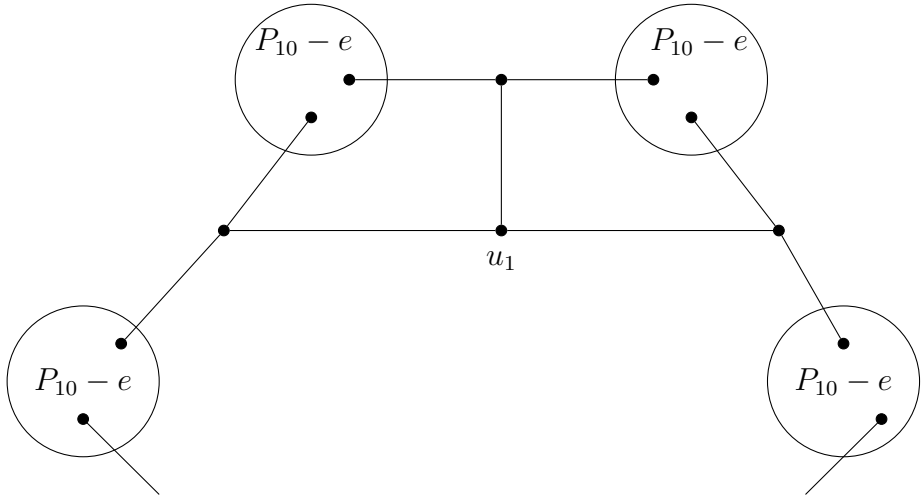
Let us  show that $G$ does not admit a non-conflicting no-where zero $Z_2\times Z_2$ flow with respect to $\overline{F}$ for any perfect matching $F$ of $G$. Let $F$ be a perfect matching of $G$, and let $\theta$ be a no-where zero $Z_2\times Z_2$ flow of $G/\overline{F}$. Consider the vertex $u_1$. Then, there is an edge $u_1w_1$ of $G$ such that $u_1w_1\notin F$. Here $w_1$ is a central vertex of a path of length two. Thus, there is $j$, $1\leq j\leq 3\ell$, such that the two edges of $G$ that connect $H_j$ to $V(G)\backslash V(H_j)$ belong to $F$. Hence, they have the same flow value with respect to $\theta$. Now, with the same approach as we did in Proposition \ref{prop:PetersenNonConflicFlow}, one can show that $H_j$ contains a conflict with respect to the flow $\theta$. The proof is complete.
\end{proof}

\section{Conclusion and future work}
\label{sec:conclusion}

In this paper, we considered the problem of existence of a perfect matching $F$ in a bridgeless cubic graph $G$ with respect to which $G/\overline{F}$ admits a non-conflicting no-where zero $Z_2\times Z_2$ flow. We demonstrated the usefulness of this concept by showing that if a bridgeless cubic graph $G$ has such a perfect matching, then it admits a normal 6-edge-coloring. Moreover, we have related non-conflicting no-where zero $Z_2\times Z_2$ flows in cubic graphs to a recent conjecture of Thomassen about the existence of a pair of edge-disjoint perfect matchings in highly edge-connected regular graphs. Our main results state that claw-free bridgeless cubic graphs $G$ have a perfect matching $F$ with respect to which $G/\overline{F}$ admits a non-conflicting no-where zero $Z_2\times Z_2$ flow. Moreover, one can find such a perfect matching in bridgeless cubic graphs which have a 2-factor that contains at most two cycles if $G$ is not the Petersen graph. In the second half of the paper, we constructed infinitely many 2-edge-connected cubic graphs $G$ that do not admit a non-conflicting no-where zero $Z_2\times Z_2$ flow with respect to $\overline{F}$ for any perfect matching $F$ of $G$.

From our perspective there are some questions that deserve further consideration. In Proposition \ref{prop:MinCounterExample}, we showed that the smallest counter-example to Conjecture \ref{conj:6NormalConj} is 3-edge-connected. Hence they do not contain 2-edge-cuts. This prompted Mazzuoccolo to ask whether $P_{10}$ is the only 3-edge-connected cubic graph which does not admit a non-conflicting no-where zero $Z_2\times Z_2$ flow with respect to $\overline{F}$ for any perfect matching $F$ of $G$? Note that since $P_{10}$ admits a normal 5-edge-coloring, it is not a counter-example to Conjecture \ref{conj:6NormalConj}. Thus, Mazzuoccolo's statement combined with Proposition \ref{prop:MinCounterExample} and Lemma \ref{lem:NonConflictFlowNormal6coloring} implies Conjecture \ref{conj:6NormalConj}. 

The author suspects that this statement is not true. Thus, Mazzuoccolo asked whether $P_{10}$ is the only cyclically 4-edge-connected cubic graph $G$ which does not have a non-conflicting no-where zero $Z_2\times Z_2$ flow with respect to $\overline{F}$ for any perfect matching $F$ of $G$. Note that if this statement is true, then cyclically 4-edge-connected cubic graphs will admit a normal 6-edge-coloring. Unfortunately, as we stated in Remark \ref{rem:3edgecuts}, this does not directly imply Conjecture \ref{conj:6NormalConj}. Nevertheless, the author thinks that this is a question deserving further consideration. The author suspects that the answer to this question should be negative, too. As a result, he would like to offer: 
\begin{conjecture}
    \label{conj:Cyclically6edgeconnectedExample} There exists infinitely many cyclically 6-edge-connected cubic graphs $G$ which do not have a non-conflicting no-where zero $Z_2\times Z_2$ flow with respect to $\overline{F}$ for any perfect matching $F$ of $G$.
\end{conjecture}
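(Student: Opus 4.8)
The plan is to build the desired family by assembling many copies of a Petersen-based \emph{forcing gadget} along a highly connected cubic backbone, and to show that for every perfect matching $F$ the interaction of $F$ with some copy reproduces the obstruction of Proposition \ref{prop:PetersenNonConflicFlow}. The engine throughout is the flow-cut parity principle: in any no-where zero $Z_2\times Z_2$ flow the values crossing an edge-cut sum to $0$, so across an \emph{odd} cut each of $\alpha$, $\beta$ and $\alpha+\beta$ must appear an odd (in particular positive) number of times. Every conflict produced below will be forced by exhibiting an odd cut contained in $F$ together with enough rigidity in $\overline{F}$ to make the mandatory $\alpha$-endpoint and $\beta$-endpoint adjacent, exactly as in Proposition \ref{prop:PetersenNonConflicFlow} and in the copies $H_j$ of Theorem \ref{thm:2edgeconnectedexamples}.

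First I would isolate a gadget $W$ with a six-edge boundary so that the boundary cut can serve as a \emph{cyclic} $6$-edge-cut, rather than the size-two boundaries of $P_{10}-e$ that destroy connectivity in Theorem \ref{thm:2edgeconnectedexamples}. A natural candidate is obtained from $P_{10}$ by deleting three edges of a perfect matching, leaving six degree-two vertices; the three surviving matching edges together with the two $5$-cycles of $P_{10}$ retain several internal odd cuts. The key step is a \textbf{gadget lemma}: for every embedding of $W$ into a cubic graph, every perfect matching $F$ and every no-where zero $Z_2\times Z_2$ flow $\theta$ of the contracted graph, if the six boundary edges all lie in $F$ then $W$ contains a conflict. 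I would prove this by tracing an internal odd cut of $W$ that is forced into $F$ because the boundary is, applying the parity principle to obtain an internal $\alpha$-edge and an internal $\beta$-edge, and then using the Petersen-type adjacency argument of Proposition \ref{prop:PetersenNonConflicFlow} to place their endpoints on a common edge of $\overline{F}$.

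Next I would design the backbone so that (i) $G$ is cyclically $6$-edge-connected and (ii) for \emph{every} perfect matching $F$ some copy of $W$ has its whole boundary inside $F$. For (ii) I would adapt the hub argument of Theorem \ref{thm:2edgeconnectedexamples}: introducing auxiliary degree-three ``hub'' vertices whose incident edges cannot all lie in $F$ forces, by a global parity count over the copies, at least one copy to receive all six of its boundary edges in $F$. Taking $\ell$ copies for each $\ell\ge 1$ then yields infinitely many graphs, and combining the gadget lemma with the choice of that copy shows $G/\overline{F}$ has a conflict for every $F$; hence $G$ admits no non-conflicting flow.

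The hard part will be reconciling goals (i) and (ii). High cyclic connectivity means large cuts and therefore a great deal of freedom for $F$, which works directly against any scheme that tries to pin an entire boundary inside a single matching; conversely the odd-cut rigidity that powers the gadget lemma tends to create small cuts. I expect the decisive difficulty to lie in proving a version of the gadget lemma that is robust to \emph{all} ways $F$ can meet the six boundary edges, not only the all-in-$F$ case, so that the global parity argument can afford to guarantee the bad configuration at just one copy while the remaining copies are merely required not to rescue the flow. Verifying cyclic $6$-edge-connectivity of the final graph, by checking that every cut of size at most five is trivial, is routine once the gadget and backbone are fixed, but it must be carried out in tandem with the forcing argument so that neither requirement is sacrificed to the other.
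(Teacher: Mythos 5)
The statement you are trying to prove is Conjecture \ref{conj:Cyclically6edgeconnectedExample}, which the paper explicitly leaves \emph{open}: the author offers it as a conjecture at the end of Section \ref{sec:conclusion} and supplies no proof, so there is no argument of the paper's to compare yours against. Your submission must therefore be judged on its own, and as written it is a research programme rather than a proof: both of its load-bearing steps are announced but not established, and you yourself flag the central difficulty without resolving it.

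Concretely, two gaps are fatal in the current form. First, the ``gadget lemma'' is unproved, and its hypothesis is the wrong one to aim for. In Theorem \ref{thm:2edgeconnectedexamples} the forcing works because each copy of $P_{10}-e$ meets the rest of the graph in a $2$-edge-cut: once both cut edges lie in $F$ they necessarily carry the same flow value, so the copy behaves exactly like $P_{10}$ in Proposition \ref{prop:PetersenNonConflicFlow}. A six-edge boundary destroys both halves of that mechanism: the cut is even, so the parity principle says nothing about it, and even when all six boundary edges lie in $F$ their flow values are unconstrained as a multiset, so no single internal odd cut of $W$ is ``forced into $F$'' and the Petersen adjacency argument has nothing to latch onto. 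You would need to verify, for the specific gadget $P_{10}$ minus three perfect-matching edges and for \emph{every} admissible assignment of $\alpha,\beta,\alpha+\beta$ to the boundary and every admissible trace of $F$ on $W$ (which meets the boundary in $0$, $2$, $4$, or $6$ edges, not only $6$), that a conflict appears; none of this case analysis is carried out, and there is no a priori reason it succeeds. Second, the ``hub'' device you propose for guaranteeing that some copy has its whole boundary in $F$ is precisely what makes the construction of Theorem \ref{thm:2edgeconnectedexamples} only $2$-edge-connected: hubs and length-two paths create small cyclic cuts. You acknowledge that high cyclic connectivity gives $F$ too much freedom to be pinned down this way, but acknowledging the tension is not the same as resolving it, and no mechanism compatible with cyclic $6$-edge-connectivity is exhibited. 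Until both the robust gadget lemma and a connectivity-preserving forcing argument are actually proved, the conjecture remains open.
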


Note that in Conjecture \ref{conj:Cyclically6edgeconnectedExample} for cyclic edge-connectivity we write six and not a larger number, because of the standard Jaeger-Swart conjecture \cite{JaegerSwart1980} in the area which predicts that all cyclically 7-edge-connected cubic graphs are 3-edge-colorable. Note that Thomassen conjectures even more that all cyclically 8-edge-connected cubic graphs are Hamiltonian.

Another interesting problem by Mazzuoccolo is the following one:

\begin{problem} \label{prob:GirthAnyGexamples} For every $g\geq 3$ construct infinitely many 3-edge-connected cubic graphs $G$ of girth at least $g$ that do not admit a non-conflicting no-where zero $Z_2\times Z_2$ flow with respect to $\overline{F}$ for any perfect matching $F$ of $G$. 
\end{problem} This problem is the reminiscent of the girth conjecture in the area that was predicting that all snarks should have bounded girth. Recall that girth conjecture has been refuted by Kochol in \cite{Kochol96} where he constructed snarks of arbitrary large girth. Kochol's approach has been simplified in papers \cite{Mac22} and \cite{MacSko21}.

As we mentioned in Observation \ref{obs:cubicbips}, every bipartite cubic graph admits a non-conflicting flow with respect to every 2-factor $\overline{F}$. One can state a conjecture that predicts the converse: if a cubic graph $G$ admits a non-conflicting flow with respect to every 2-factor $\overline{F}$, then $G$ is bipartite. Unfortunately, this statement is not true. Consider a cubic graph such that its every 2-factor is a Hamiltonian cycle (such as $K_4$, Figure \ref{fig:K4}). Then it is an obvious counter-example to our conjecture by Observation \ref{obs:2factorEvenCycles}. One can construct more connected examples as follows: consider a graph $G$ which is a ring of diamonds or 2-cycles (see Theorem \ref{prop:AnushVahanClawfreebridgelessCharac}). Note that the components of every 2-factor $\overline{F}$ of such a graph are even cycles. Hence, they admit a non-conflicting no-where zero $Z_2\times Z_2$ flow with respect to $\overline{F}$ for any perfect matching $F$ of $G$ by Observation \ref{obs:2factorEvenCycles}. Note that these graphs are not bipartite as diamonds contain triangles. The examples described above are 3-edge-colorable. If one takes the graph $K_{2}^{3}$ (Figure \ref{fig:K23}) and replaces both of its vertices with a $P_{10}-v$, then the resulting graph is not 3-edge-colorable, however $G/\overline{F}$ admits a non-conflicting no-where zero $Z_2\times Z_2$-flow with respect to every 2-factor $\overline{F}$. Thus, as an interesting problem we would like to offer:

\begin{problem}
    \label{prob:CharacterizeConnectedGraphs} Characterize 2-edge-connected cubic graphs in which $G/\overline{F}$ admits a non-conflicting no-where zero $Z_2\times Z_2$-flow with respect to every 2-factor $\overline{F}$. 
\end{problem} Note that as we mentioned in Remark \ref{rem:traingle}, if $\overline{F}$ contains a triangle, then $G/\overline{F}$ does not admit a non-conflicting no-where zero $Z_2\times Z_2$-flow with respect to $\overline{F}$. Thus, graphs satisfying the conditions of Problem \ref{prob:CharacterizeConnectedGraphs} should not contain a triangle in any of its 2-factor. As rings of diamonds demonstrate, this does not necessarily mean that $G$ should not contain a triangle.

Our last conjecture states:
\begin{conjecture}
    \label{conj:HaniltonianPath} Let $G$ be a bridgeless cubic graph containing a Hamiltonian path. Then $G/\overline{F}$ admits a non-conflicting no-where zero $Z_2\times Z_2$ flow with respect to some 2-factor $\overline{F}$ unless $G$ is the Petersen graph.
\end{conjecture}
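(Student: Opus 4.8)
The plan is to proceed by induction on $|V(G)|$, mirroring the structure of the proof of Theorem~\ref{thm:TwoCycles2factor}, while exploiting the linear order that a Hamiltonian path $P=v_1\cdots v_n$ imposes on the vertices. First I would dispose of the easy cases: if $P$ closes into a Hamiltonian cycle (that is, $v_1v_n\in E(G)$), then $G$ is $3$-edge-colorable and Observation~\ref{obs:3edgecolorablecubic} finishes the argument, and the same holds whenever $G$ happens to be $3$-edge-colorable. Hence I may assume that $G$ is a snark, that $v_1\not\sim v_n$, and that $G\neq P_{10}$. By Remark~\ref{rem:traingle} I may only ever produce triangle-free $2$-factors, so a preliminary reduction in the spirit of Proposition~\ref{prop:MinCounterExample} lets me assume $G$ has girth at least four, and non-trivial $3$-edge-cuts would be handled by a contract-and-glue reduction as in Theorem~\ref{thm:TwoCycles2factor} (contract one side, apply induction, reattach using a $3$-edge-coloring on a Hamiltonian side), though here one must additionally check that traceability is preserved on the inductive side.

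The heart of the plan is a flow-correction reformulation. Writing $\alpha=(1,0)$, $\beta=(0,1)$ and $\alpha+\beta=(1,1)$, I start from the assignment $\theta_0(e)=\alpha+\beta$ on every edge of $F$. As in Observation~\ref{obs:2factorEvenCycles}, $\theta_0$ satisfies the flow condition at every even cycle of $\overline{F}$ but fails it by exactly $\alpha+\beta=(1,1)$ at every odd cycle. The number of odd cycles of $\overline{F}$ is even, so I would pair them up and route, in the contracted graph $G/\overline{F}$, a set $P_x$ of edges correcting the first coordinate and a set $P_y$ correcting the second, where each of $P_x,P_y$ meets every odd cycle in an odd number of edges and every even cycle in an even number. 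Flipping the first coordinate of $\theta_0$ along $P_x$ and the second along $P_y$ then repairs the flow at every odd cycle while leaving all even cycles balanced. Provided $P_x$ and $P_y$ are edge-disjoint, each edge of $F$ ends up with value $\alpha+\beta$ (on neither), $\beta$ (on $P_x$ only), or $\alpha$ (on $P_y$ only), so the result is a genuine no-where zero $Z_2\times Z_2$ flow. Since $G$ is bridgeless, $G/\overline{F}$ is $2$-edge-connected; in the key case of exactly two odd cycles $C_1,C_2$ the sets $P_x,P_y$ are two edge-disjoint paths from $C_1$ to $C_2$, which exist by Menger's theorem, and reducing to this case is part of the plan below.

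With this reformulation the only remaining requirement is the non-conflicting condition: no edge of $\overline{F}$ may join a vertex whose $F$-edge lies on $P_y$ (an $\alpha$-vertex) to a vertex whose $F$-edge lies on $P_x$ (a $\beta$-vertex). This is exactly where I would use the Hamiltonian path. I expect to select $\overline{F}$ so that it has as few odd cycles as possible, ideally exactly two, and then choose the two edge-disjoint paths $P_x,P_y$ between $C_1$ and $C_2$ so that their endpoints on each $C_i$ are \emph{non-adjacent} on $C_i$. The girth and connectivity reductions above guarantee that $C_1$ and $C_2$ have length at least five, so such non-adjacent endpoints exist, exactly as in the length-five analysis of Case~1 of Theorem~\ref{thm:TwoCycles2factor}; and the Petersen graph is precisely the configuration in which every such routing is forced into a conflict, which is why it is the sole exception, in agreement with Proposition~\ref{prop:PetersenNonConflicFlow}.

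The hard part will be controlling the cyclic structure of the $2$-factor. A Hamiltonian path, unlike the hypothesis of Theorem~\ref{thm:TwoCycles2factor}, does not by itself bound the number of cycles of a natural $2$-factor, so the main technical step is to show that a traceable non-Petersen cubic graph always admits a triangle-free $2$-factor with few odd cycles whose cross-edges are spread out enough to route $P_x$ and $P_y$ without conflict. I would attempt this by building $\overline{F}$ from $P$: use the two extra edges at the endpoints $v_1$ and $v_n$ to close the path into a small number of long cycles, and then argue, via a counting argument on the matching edges joining the odd cycles analogous to inequality~(\ref{eq:PairsNonPairs}), that either all cycles can be made even (finishing through Observation~\ref{obs:2factorEvenCycles} or Observation~\ref{obs:3edgecolorablecubic}) or there is enough room to place the two correction systems conflict-free. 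Making this argument work uniformly when many odd cycles are present, where edge-disjoint routing of $P_x$ and $P_y$ becomes a genuine $T$-join problem rather than a single pair of paths, and isolating $P_{10}$ as the unique exception, is the crux of the proof and the step I expect to be genuinely difficult.
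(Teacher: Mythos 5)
The statement you are trying to prove is Conjecture~\ref{conj:HaniltonianPath}, which the paper explicitly leaves \emph{open}: it is offered in Section~\ref{sec:conclusion} as a direction for future research, and no proof of it appears anywhere in the paper. So there is no argument of the author's to compare yours against; the only question is whether your proposal closes the conjecture, and it does not. You say so yourself in your final paragraph, but let me point at the specific places where the plan, as written, does not go through. First, the step ``build $\overline{F}$ from $P$'' is not an argument: adding the extra edges at $v_1$ and $v_n$ to a Hamiltonian path does not produce a 2-factor (it produces a graph with two vertices of degree at least $3$ and no control over the degrees elsewhere), and it is precisely the absence of any known bound on the number of cycles -- let alone odd cycles -- of a 2-factor in a traceable cubic graph that separates this conjecture from Theorem~\ref{thm:TwoCycles2factor}. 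If a traceable cubic graph always had a 2-factor with at most two cycles, the conjecture would already follow from that theorem; your plan implicitly assumes a statement of this strength without proving it. Second, the claim that ``the girth and connectivity reductions above guarantee that $C_1$ and $C_2$ have length at least five'' is unsupported: Proposition~\ref{prop:MinCounterExample} is a reduction for the normal-6-edge-coloring conjecture, not for the existence of non-conflicting flows, girth at least four only gives cycles of length at least four, and contracting triangles or 3-edge-cuts is not shown to preserve either traceability or the non-conflicting-flow property (the claw-free case shows triangles in $G$ are not themselves obstructions, only triangles in $\overline{F}$ are, so a girth reduction is not even clearly available here).

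Third, the heart of the matter -- choosing the two edge-disjoint correction systems $P_x$ and $P_y$ so that no edge of $\overline{F}$ joins an $\alpha$-saturated vertex to a $\beta$-saturated vertex -- is exactly the non-conflicting condition restated in different language, and your proposal offers no mechanism by which the Hamiltonian path forces such a choice to exist. In the case of many odd cycles this becomes an edge-disjoint $T$-join problem with an additional adjacency constraint along the cycles of $\overline{F}$, and Menger's theorem gives you nothing about where the paths attach to each cycle. Finally, the assertion that $P_{10}$ is ``precisely the configuration in which every such routing is forced into a conflict'' is exactly the uniqueness claim the conjecture makes, and Theorem~\ref{thm:2edgeconnectedexamples} shows that in the broader 2-edge-connected setting there are infinitely many obstructions, so the exceptional role of $P_{10}$ cannot be waved at -- it has to be derived from traceability, which your sketch never uses in an essential way. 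In short: the reformulation via flow corrections along $T$-joins is a reasonable and correct way to think about the problem, but every step that would actually need the Hamiltonian path hypothesis is left as an expectation rather than an argument, so the conjecture remains open.
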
 Note that Conjecture \ref{conj:HaniltonianPath} implies that if a bridgeless cubic graph $G$ has a vertex $z$ such that $G-z$ has a Hamiltonian cycle, then $G/\overline{F}$ admits a non-conflicting no-where zero $Z_2\times Z_2$ flow with respect to some 2-factor $\overline{F}$ unless $G$ is the Petersen graph. The latter implies that if $G$ is a cubic graph such that for every vertex $z$ $G-z$ contains a Hamiltonian cycle, then $G/\overline{F}$ admits a non-conflicting no-where zero $Z_2\times Z_2$ flow with respect to some 2-factor $\overline{F}$ unless $G$ is the Petersen graph. Non-3-edge-colorable bridgeless cubic graphs in which  for every vertex $z$ $G-z$ contains a Hamiltonian cycle are called Hypo-Hamiltonian snarks and examples of such graphs can be found in \cite{MacSko2007} and Theorem 2.5 from \cite{SteffenHypoHamiltonian}.

\section*{Acknowledgement} The author would like to thank Giuseppe Mazzuoccolo for useful discussions over the topic of the present paper.



\bibliographystyle{elsarticle-num}



\end{document}